\def\aa{{\bf a}} 
\newcommand{\N}{\mathbb{N}}
\newcommand{\bb}{\mathfrak{b}}
\newcommand{\point}{\mathfrak{p}}
\newcommand{\m}{\mathfrak{m}}
\newcommand{\MS}{\mathfrak{S}}
\newcommand{\Ker}{{\operatorname{Ker}}}
\newcommand{\ord}{{\operatorname{ord}}}
\newcommand{\Ap}{{\operatorname{Ap}}}
\newcommand{\gr}{{\operatorname{gr}}}
\newcommand{\bmt}[1]{\begin{bmatrix}#1\end{bmatrix}}
\newtheorem{pro}{Proposition}[section]
\newtheorem{Lem}[pro]{Lemma}
\newtheorem{Cor}[pro]{Corollary} 
\newtheorem{Theo}[pro]{Theorem}
\newtheorem*{Theoetoile}{Theorem} 
\theoremstyle{definition}
\newtheorem{Defi}[pro]{Definition}
\newtheorem{Data}[pro]{Setting}
\newtheorem{Rem}[pro]{Remark}
\numberwithin{equation}{section}
\title{{\small   The weak Lefschetz property of Gorenstein algebras of codimension three associated to the Ap\'ery sets}}
\date{\today}
\author{Rosa M. Mir\'o-Roig}
\address{Universitat de Barcelona, Departament de Matem\`atiques i Inform\`atica, Gran Via de les Corts Catalanes 585, 08007 Barcelona, Spain.}
\email{miro@ub.edu}
\author{Quang Hoa Tran}
\address{University of Education, Hue University,  34 Le Loi St., Hue City, Vietnam.}
\email{tranquanghoa@hueuni.edu.vn}
\begin{document}
\maketitle
\begin{abstract}
It has been conjectured that {\it all} graded Artinian Gorenstein algebras of codimension three have the weak Lefschetz property  over a field of characteristic zero. In this paper, we study the weak Lefschetz property of  associated graded  algebras $A$ of the Ap\'ery set of $M$-pure symmetric numerical semigroups generated  by four natural numbers. In 2010, Bryant proved that these algebras are graded Artinian Gorenstein algebras of codimension three. In a recent article, Guerrieri showed that if $A$ is not a complete intersection, then $A$ is of form $A=R/I$ with $R=K[x,y,z]$ and
\begin{align*} 
I=(x^a, y^b-x^{b-\gamma} z^\gamma, z^c, x^{a-b+\gamma}y^{b-\beta}, y^{b-\beta}z^{c-\gamma}),
\end{align*}
where $ 1\leq \beta\leq b-1,\; \max\{1, b-a+1 \}\leq \gamma\leq \min \{b-1,c-1\}$ and $a\geq c\geq 2$. We prove that $A$ has the  weak Lefschetz property  in the following cases:
\begin{enumerate}[$\bullet$]
\item $ \max\{1,b-a+c-1\}\leq \beta\leq b-1$ and $\gamma\geq \lfloor\frac{\beta-a+b+c-2}{2}\rfloor$;
\item $ a\leq 2b-c$ and $| a-b| +c-1\leq \beta\leq b-1$;
\item one of $a,b,c$ is at most five. 
\end{enumerate} 
{{\footnotesize  \textsc{Keywords}:   Ap\'ery set,  Artinian Gorenstein algebras,  numerical semigroups, weak Lefschetz property.}}\\
\texttt{MSC2010}: primary 13E10, 13H10; secondary 13A30, 13C40.
\end{abstract}

\section{Introduction}
The weak Lefschetz property (WLP for short) for an Artinian graded algebra $A$ over a field $K$ simply says that the multiplication by a general linear form  $\times L: [A]_{i}\longrightarrow [A]_{i+1}$ has maximal rank in every degree $i$. At first glance this might seem to be a simple problem of linear algebra. However, determining which graded Artinian $K$-algebras have the WLP is notoriously difficult. Many authors have studied the problem from many different points of view, applying tools from representation theory, topology, vector bundle theory, plane partitions, splines, differential geometry, commutative algebra, among others (see for instance \cite{BK2007, HSS2011,   MMO2013, MMR03,MMR17, MMN2011, MMN2012,MN2013,  MiroRoig2016,  MRT19, Stanley80, Watanabe1987}). Even the characteristic of $K$ plays an interesting role; see for example \cite{BK2011,CookII2012,CN2011,LF2010,MMN2011}. 

One of the most interesting open problems in this field is whether \textit{all} graded Artinian Gorenstein algebras of codimension three have the WLP in characteristic zero. In the special case of codimension three complete intersections, a positive answer was obtained in characteristic zero in \cite{HMNW2003} using the Grauert-M\"{u}lich theorem. For positive characteristic, on the other hand, only the case of monomial complete intersections has been studied (see \cite{BK2011,CookII2012,CN2011}), applying many different approaches from combinatorics.

For the case of Gorenstein algebras of codimension three that are not necessarily complete intersections, it was known that for each possible Hilbert function an example exists having the WLP \cite{Harima1995}. Some partial results are given in \cite{MZ2008} to show that for certain Hilbert functions, all such Gorenstion algebras have the WLP. It was shown in \cite{BMMNZ14} that  all codimension three Artinian Gorenstein algebras of socle degree at most 6 have the WLP in characteristic zero.  But the general case remains completely open.

In this work, we consider a class of graded Artinian Gorenstein algebras of codimension three built up starting from the  Ap\'ery set of a numerical semigroup generated by four natural numbers. Our goal is to study whether these algebras have the WLP.  More precisely,  we consider a numerical semigroup $P$ generated by  $\{a_1,a_2,a_3,a_4\}\subset \N^4$ with $\gcd(a_1,a_2,a_3,a_4)=1$.  The \textit{Ap\'ery set $\Ap(P)$ of $P$} with respect to the minimal generator of the semigroup is defined as follows
$$\Ap(P):=\{a\in P\mid a-a_1\notin P\}=\{0=\omega_1<\omega_2<\cdots<\omega_{a_1}\}.$$
Notice that $\Ap(P)$ is a finite set and $\# \Ap(P)=a_1.$  Recall that a numerical semigroup $P$ is said to be \textit{$M$-pure symmetric} if for each $i=1,\ldots,a_1$, $\omega_i+\omega_{a_1-i+1}=\omega_{a_1}$ and $\ord(\omega_i)+\ord(\omega_{a_1-i+1})=\ord(\omega_{a_1})$, where 
$$ \ord(a):=\max \big\{ \sum_{i=1}^{4} \lambda_i \mid  a= \sum_{i=1}^{4} \lambda_ia_i  \big\}$$
is the \textit{order} of $a\in P$. Therefore the Ap\'ery set of a $M$-pure symmetric semigroup has the structure of a symmetric lattice.

Let $K$ be a field of characteristic zero and consider the homomorphism 
$$\Phi: S:=K[x_1,\ldots,x_4]\longrightarrow K[P]:=K[t^{a_1},\ldots,t^{a_4}],$$
which sends $x_i\longmapsto t^{a_i}.$ Then $K[P]\cong S/\Ker(\Phi)$ is a one dimensional ring associated to $P$. Now set $\overline{S} =S/(x_1).$ Then there is one to one correspondence between the elements of $\Ap(P)$ and the generators of $\overline{S}$ as a $K$-vector space. Let $\overline{\m}$ be the maximal homogeneous ideal of $\overline{S}$, define the \textit{associated graded algebra} of the Ap\'ery set of $P$
$$A=\gr_{\overline{\m}}(\overline{S}):=\bigoplus_{i\geq0}\frac{\overline{\m}^i}{\overline{\m}^{i+1}}.$$
It follows that $A$ is a standard graded Artinian $K$-algebra of codimension three. In the work \cite{Bryant2010}, Bryant proved that $A$ is Gorenstein if and only if $P$ is $M$-pure symmetric. In a recent paper \cite{Gu2018}, Guerrieri showed that if $A$ is an Artinian Gorenstein algebra that is not a complete intersection, then $A$ is of form $A=R/I$ with $R=K[x,y,z]$ and  
\begin{align} \label{idealGorenstein}
I=(x^a, y^b-x^{b-\gamma} z^\gamma, z^c, x^{a-b+\gamma}y^{b-\beta}, y^{b-\beta}z^{c-\gamma}),
\end{align}
where $ 1\leq \beta\leq b-1,\; \max\{1, b-a+1 \}\leq \gamma\leq \min \{b-1,c-1\}$ and $a\geq c\geq 2$. The integers $a,b,c,\beta$ and $\gamma$ are determined by the structure of $\Ap(P)$, see \cite[Section 5]{Gu2018}.

Our goal is to study the WLP of $A$. This is a continuation of our recent article \cite{MRT2019}. Our main results are the following (see Theorems~\ref{Theorem3.7}, ~\ref{Theorem3.8},~\ref{Theorem4.3} and~\ref{Theorem5.3}; Corollaries~\ref{Corollary3.6}~--~\ref{Corollary3.13}).
\begin{Theoetoile}
Consider the ideal $I$ as in \eqref{idealGorenstein}. Then $R/I$ has the WLP  if one of the following conditions is satisfied:
\begin{enumerate}
\item [\rm (i)] $ \max\{1,b-a+c-1\}\leq \beta\leq b-1$ and $\gamma\geq \lfloor\frac{\beta-a+b+c-2}{2}\rfloor$;
\item [\rm (ii)] $a\leq 2b-c$ and $| a-b| +c-1\leq \beta\leq b-1$;
\item [\rm (iii)] $a\geq 2b+c-6$;
\item [\rm (iv)]$a\geq b+c-2$ and $1\leq \beta\leq a-b-c+5$;
\item [\rm (v)] one of $a,b,c$ is at most five. 
\end{enumerate}
\end{Theoetoile}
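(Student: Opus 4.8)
The plan is to establish the WLP of $A = R/I$ by exhibiting, in each of the five cases, a specific linear form $L$ (presumably $L = x+y+z$, or perhaps $x+z$) for which multiplication $\times L \colon [A]_i \to [A]_{i+1}$ has maximal rank in every degree. Since $A$ is Gorenstein of socle degree $s$ (determined explicitly by $a,b,c$ via the structure of $\Ap(P)$), the Hilbert function $h_i$ of $A$ is symmetric, and by the standard duality for the WLP in the Gorenstein case it suffices to check that $\times L$ is injective in the degrees $i$ with $h_i \le h_{i+1}$ (equivalently, up to the peak of the Hilbert function), or dually surjective past the peak. So the first step is to compute the Hilbert function of $A = R/I$ from the generators in \eqref{idealGorenstein}: find a Gröbner basis or use the given five generators to pin down $\dim_K [A]_i$ for all $i$, and locate the degree(s) where the Hilbert function achieves its maximum.

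The second step is the core linear-algebra computation: reduce the question "$\times L$ is injective on $[A]_i$" to a statement about a concrete matrix or about the colon ideal $(I : L)$. Concretely, $\times L$ fails to be injective in degree $i$ iff there is a form $f$ of degree $i$ with $Lf \in I$ but $f \notin I$; so one wants to show $(I : L)_i = I_i$ for $i$ below the peak. This is where the five separate hypotheses enter: the inequalities among $a,b,c,\beta,\gamma$ in (i)--(v) are exactly the conditions under which the relevant syzygies / colon computations work out. I expect cases (i) and (ii) to be handled by a direct argument showing $(I:L)$ is small enough in low degrees — probably by choosing $L = x+y+z$ and analyzing monomials modulo $I$ using the binomial relation $y^b \equiv x^{b-\gamma}z^\gamma$. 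Cases (iii) and (iv), the "$a$ large" regimes, likely reduce $A$ to something close to the complete intersection $(x^a, z^c, \text{something})$ plus a controlled perturbation, letting one invoke the known WLP for codimension-three complete intersections from \cite{HMNW2003} together with a deformation/semicontinuity argument. Case (v), "one of $a,b,c \le 5$," is presumably a finite (or finitely-parametrized) check: bounding one variable bounds the socle degree or the number of Hilbert-function shapes, so one can either cite \cite{BMMNZ14} (socle degree $\le 6$) when applicable or enumerate the remaining possibilities directly.

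Throughout, the main technical device I would use is that $A$ has a monomial-plus-one-binomial presentation, so after inverting the binomial relation one can work with an almost-monomial ideal and track leading terms; the Hilbert function and the matrices of $\times L$ then become combinatorial objects (lattice points in a region cut out by the exponents $a,b,c,\beta,\gamma$). The hard part will be the uniform treatment of the "middle" degrees where the Hilbert function is flat at its maximum: there injectivity and surjectivity must both be controlled simultaneously, and the argument has to be robust across the parameter ranges. My expectation is that the paper splits precisely here — each of (i)--(v) is the hypothesis that makes one particular obstruction vanish (a rank drop in a specific Toeplitz-like matrix, or the non-vanishing of a specific determinant/binomial coefficient), and the proof of each case is a separate computation verifying that the corresponding matrix has full rank. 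I would organize the writeup as: (1) compute and record the Hilbert function; (2) reduce WLP to maximal rank of $\times(x+y+z)$ in degrees up to the peak; (3) prove each of (i)--(v) in its own subsection by the corresponding rank computation, reserving the deformation argument for the large-$a$ cases and the citation to earlier low-socle-degree results for case (v).
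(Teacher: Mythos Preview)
Your proposal has the right high-level shape for (i) and (ii) --- reduce to a single degree, write down the matrix of $\times L$, and check full rank --- but it is missing the concrete mechanisms the paper relies on, and it goes wrong in (iii)--(v).

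For (i) and (ii): the paper uses $L = x - y - z$, passes to $K[y,z]/J$ with $J$ obtained by substituting $x \mapsto y+z$, and shows $[K[y,z]/J]_{k+1}=0$ where $k=\lfloor\frac{a+b+c-\beta-3}{2}\rfloor$. The crucial simplification you are missing is a reduction (via Guerrieri's Proposition on $A_\beta = A_{\beta-1}/(0:_{A_{\beta-1}}y)$, together with the known WLP of the complete intersection $G$) that lets one check only those $\beta$ with $a+b+c-\beta$ even, plus one boundary value. The rank computation itself is then handled by a dedicated determinant lemma (Lemma~\ref{Lemmadeterminant} in the paper): the matrices that arise are upper triangular except for a single last row of nonnegative entries, and such matrices always have positive determinant. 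Your proposal does not isolate this structure, and without it the ``Toeplitz-like matrix'' computation you anticipate has no uniform argument.

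For (iii) and (iv): these are \emph{not} separate arguments in the paper but immediate corollaries of (i). Under either hypothesis one has $\lfloor\frac{\beta-a+b+c-2}{2}\rfloor\le 1$, so the $\gamma$-condition in (i) is automatic (combined with the small-$b$ cases from (v)). Your proposed deformation/semicontinuity route from the complete intersection $\aa=(x^a,y^b-x^{b-\gamma}z^\gamma,z^c)$ is a genuinely different idea, but it is not clear it works: WLP is not in general open in flat families, and the paper never invokes such an argument.

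For (v) there is a real gap. You write that bounding one of $a,b,c$ ``bounds the socle degree or the number of Hilbert-function shapes,'' but the socle degree is $a+b+c-\beta-3$, so fixing $c=4$ or $c=5$ still leaves infinitely many socle degrees and Hilbert functions as $a,b$ vary. Citing the socle-degree $\le 6$ result of \cite{BMMNZ14} covers almost nothing here. The paper instead handles $c=4$, $b=4$, $c=5$, $b=5$ each as an infinite family, combining the matrix method above with a second technique: for parameter values not reachable by Theorems~\ref{Theorem3.7}--\ref{Theorem3.8}, it builds short exact sequences $0\to (R/\bb)(-d)\to R/\aa\to R/\point\to 0$ linking $R/I$ to the complete intersection $R/\aa$, and uses the Snake lemma plus the known WLP of $R/\aa$ to force injectivity of $\times L$ on $R/I$. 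Your plan has no analogue of this exact-sequence step, and without it case~(v) cannot be closed.
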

Notice that the condition (iv) is a special case of (i) and the condition (iii) is a consequence of (i) and (v). 

The paper is organized as follows. 
In Section~\ref{Section2}, after recalling  some basic results  on the WLP of Artinian Gorenstein algebras, we prove a technical lemma that decides the singularity of a class of matrices (Lemma~\ref{Lemmadeterminant}). 
From Section~\ref{Section3}, we study the WLP of the ideal as in Setting \ref{idealGorenstein}. The heart of the paper is Section~\ref{Section3}, we prove the two main Theorems~\ref{Theorem3.7} and~\ref{Theorem3.8} where it is shown that $R/I$ has the WLP  for most of the choices of the parameters $a,b,c,\beta$ and $\gamma$. By applying these theorems, we recover our main results in  \cite{MRT2019} (see Corollaries~\ref{Corollary3.6} and~\ref{Corolarry3.9}). 
In the last two  sections, we show that $R/I$ has the WLP if one of $a,b$ and $c$ is equal to  four  or five in Theorem~\ref{Theorem4.3} or Theorem~\ref{Theorem5.3}, respectively.

\section{Background and preparatory results}\label{Section2}
In this section, we fix notations and we recall known facts needed later in this work. We fix $K$ a field of characteristic zero and $R=K[x_1,\ldots,x_n]$. Let
$$A=R/I=\bigoplus_{i=0}^D [A]_i$$
be a graded Artinian algebra. 

\begin{Defi}
For any graded Artinian  algebra $A=R/I=\bigoplus_{i=0}^D [A]_i$, the \textit{Hilbert function} of $A$ is a function
$$h_A: \N\longrightarrow \N$$
defined by $h_A(t)=\dim_K [A]_t$. As $A$ is Artinian, its Hilbert function is equal to its \textit{$h$-vector} that one can express as a sequence
$$\underline{h}_A=(1=h_0,h_1,h_2,h_3,\ldots, h_D),$$
with $h_i=h_A(i)>0$ and $D$ is the last index with this property. The integer $D$ is called the \textit{socle degree} of $A$. The $h$-vector  $\underline{h}_A$ is said to be \textit{symmetric} if $h_{D-i}=h_i$ for every $i=0,1,\ldots,\lfloor\frac{D}{2}\rfloor.$ 
\end{Defi}

\begin{Defi} \cite[Proposition~2.1]{MW2009}
A graded Artinian algebra $A$ as above is Gorenstein if and only if $h_D=1$ and the multiplication map 
$$[A]_i\times [A]_{D-i}\longrightarrow [A]_D\cong K$$ is a perfect pairing for all $i=0,1,\ldots,\lfloor\frac{D}{2}\rfloor.$
\end{Defi}
It follows that the $h$-vector of a graded Artinian Gorenstein is symmetric.

\begin{Defi}
A graded Artinian $K$-algebra $A$ has the \textit{weak Lefschetz property}, briefly WLP, if there exists $L\in [A]_1$ such that the map $\times L: [A]_i\longrightarrow [A]_{i+1}$ has maximal rank for each $i$. We also say that a homogeneous ideal $I$ has the WLP if $R/I$ has the WLP.
\end{Defi}
We can determine the WLP by considering the rank of the multiplication map by  a general linear form in every degree. However, for a graded Artinian Gorenstein $K$-algebra, the WLP is determined by considering only the multiplication map in one degree (see \cite[Proposition 2.1]{MMN2011}). Moreover, we have the following.
\begin{pro}\cite[Theorem 2.1]{Gu2018}\label{Proposition2.7}
Assume that $G=\bigoplus_{i=0}^D [G]_i$ is a standard graded Artinian Gorenstein $K$-algebra with socle degree $D$ that has the WLP. If $\ell\in [G]_1$ is a linear element, then the quotient ring
$$A=\frac{G}{(0\colon_G \ell)}$$
is also a standard graded Artinian Gorenstein $K$-algebra. Furthermore, if $G$ and $A$ have the same codimension and the socle degree $D$  is odd, then $A$ has also the WLP.
\end{pro}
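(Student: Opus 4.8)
The plan is to reduce everything to the $G$-linear isomorphism induced by multiplication by $\ell$, and then to the one-degree criterion for the WLP of Gorenstein algebras. \emph{First, that $A$ is Artinian Gorenstein.} Multiplication by $\ell$ on $G$ is graded and $G$-linear, raises degrees by one, and has kernel $(0:_G\ell)$; it therefore induces an injective graded map $\mu\colon A\to G$, $\bar g\mapsto \ell g$, which raises degrees by one and has image the ideal $\ell G$. Being a quotient of the standard graded Artinian algebra $G$, the ring $A$ is again standard graded and Artinian, so it remains to check that it is Gorenstein, which I would do by computing its socle. An element $\bar g$ lies in $\operatorname{soc}(A)$ iff $\mathfrak m_G\, g\subseteq (0:_G\ell)$, i.e. $\mathfrak m_G(\ell g)=0$ in $G$, i.e. $\ell g\in\operatorname{soc}(G)=[G]_D$; hence $\mu(\operatorname{soc}(A))=\ell G\cap[G]_D=\ell\cdot[G]_{D-1}$. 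We may assume $\ell\ne 0$ in $G$ (otherwise $A=0$), and then perfectness of the pairing $[G]_1\times[G]_{D-1}\to[G]_D\cong K$ forces $\ell\cdot[G]_{D-1}=[G]_D$, which is one-dimensional. Thus $\operatorname{soc}(A)$ is one-dimensional and concentrated in degree $D-1$, so $A$ is Artinian Gorenstein of socle degree $D-1$. (Equivalently, if $G=R/\operatorname{Ann}_R(F)$ is a Macaulay presentation, then $A=R/\operatorname{Ann}_R(\ell\circ F)$ with $\ell\circ F$ a form of degree $D-1$.)

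\emph{Next, that $A$ has the WLP.} Assume now in addition that $\operatorname{codim}(A)=\operatorname{codim}(G)$ and $D=2k+1$ is odd, so $A$ is Gorenstein of even socle degree $2k$. By \cite[Proposition 2.1]{MMN2011} it is enough, in order to conclude that $A$ has the WLP, to verify that $\times L\colon [A]_{k-1}\to[A]_k$ has maximal rank for a general $L\in[A]_1$; I will in fact prove injectivity, which forces $\dim_K[A]_{k-1}\le\dim_K[A]_k$ (so injectivity is maximal rank here) and, by Gorenstein duality, also yields surjectivity of $\times L\colon[A]_k\to[A]_{k+1}$. Since $\operatorname{codim}(A)=\operatorname{codim}(G)$ we get $[(0:_G\ell)]_1=0$, hence $[A]_1=[G]_1$, so a general $L\in[A]_1$ is a general linear form of $G$ and realizes the WLP of $G$. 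By the $G$-linearity of $\mu$, the map $\times L\colon[A]_{k-1}\to[A]_k$ is, after identification via the isomorphisms $\mu$ in degrees $k-1$ and $k$, the map $\times L\colon[\ell G]_k\to[\ell G]_{k+1}$, so it suffices to prove the latter is injective. Now $[\ell G]_k\subseteq[G]_k$, and $\times L\colon[G]_k\to[G]_{k+1}$ is bijective: it has maximal rank because $G$ has the WLP, and $\dim_K[G]_k=\dim_K[G]_{k+1}$ by the symmetry of the Hilbert function of a Gorenstein algebra of socle degree $2k+1$. A bijection restricts to an injection on any subspace, and $\times L$ sends $[\ell G]_k$ into $[\ell G]_{k+1}$; hence $\times L\colon[\ell G]_k\to[\ell G]_{k+1}$, and therefore $\times L\colon[A]_{k-1}\to[A]_k$, is injective, and $A$ has the WLP.

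I expect the only real subtleties to be bookkeeping: checking carefully that $\mu$ is graded of degree one and $G$-linear, so that it genuinely intertwines the two multiplication maps in the relevant degrees, and invoking the correct single-degree reduction from \cite[Proposition 2.1]{MMN2011}. The parity hypothesis on $D$ enters precisely at that last point: it makes the critical degree $k-1$ for the even-socle-degree algebra $A$ correspond to the bijective ``middle'' map $[G]_k\to[G]_{k+1}$ of $G$. For even $D$ the analogous map of $G$ would only be known to be surjective, and surjectivity does not pass to the subspace $[\ell G]_k$; this is why one proves injectivity for $A$ rather than chasing surjectivity directly on the ideal $\ell G$.
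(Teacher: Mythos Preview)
The paper does not actually prove this proposition; it is quoted verbatim from \cite[Theorem~2.1]{Gu2018} and used as a black box. So there is no ``paper's own proof'' to compare against.

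That said, your argument is correct and self-contained. Identifying $A$ with the ideal $\ell G$ via the injective, degree-one, $G$-linear map $\mu$ is exactly the right device: it cleanly yields the socle computation (hence Gorensteinness of $A$ with socle degree $D-1$), and it intertwines multiplication by $L$ on $A$ with multiplication by $L$ on $\ell G\subseteq G$. The WLP step is handled correctly: the codimension hypothesis forces $[A]_1=[G]_1$, so a general $L$ for $A$ is general for $G$; then the bijectivity of $\times L\colon[G]_k\to[G]_{k+1}$ (from the WLP of $G$ together with the symmetry of its Hilbert function at socle degree $D=2k+1$) restricts to an injection on the subspace $[\ell G]_k$, which transports back to the injectivity of $\times L\colon[A]_{k-1}\to[A]_k$. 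Your invocation of the one-degree criterion \cite[Proposition~2.1]{MMN2011} and the duality remark are both sound, and your closing explanation of why odd $D$ is essential is accurate.
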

We close this section by giving the following technical lemma which is useful to decide the singularity of matrices in the next sections.
\begin{Lem}\label{Lemmadeterminant}
Let $n\geq 2$ be an integer and  denote by $\MS_n$ the set of all square matrices of order $n$ of form
\begin{align*}
A=\bmt{1& -a_{12}&\cdots&-a_{1n-1}&-a_{1n}\\ 0&1& \cdots&-a_{2n-1}&-a_{2n}\\\vdots&\vdots&\ddots&\vdots&\vdots\\
0&0&\cdots&1&-a_{n-1n}\\
a_1&a_2&\cdots&a_{n-1}&a_n},
\end{align*}
where $a_{ij}\geq 0$ for all $1\leq i<j\leq n,\; a_i\geq 0$ for all $i=1,2,\ldots,n-1$ and $a_n>0.$  Then, for any integer $n\geq 2$ and $A\in \MS_n$, $\det(A)>0.$
\end{Lem}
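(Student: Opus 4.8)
The plan is to prove the inequality $\det(A) > 0$ by induction on $n$, expanding along the bottom row and exploiting the near-triangular structure of the matrices in $\MS_n$. The base case $n = 2$ is a direct computation: $\det\!\bmt{1 & -a_{12} \\ a_1 & a_2} = a_2 + a_1 a_{12} > 0$, since $a_2 > 0$ and all other entries are nonnegative.

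For the inductive step, fix $A \in \MS_n$ with $n \geq 3$. The key observation is that $\det(A)$ is a polynomial in the entry $a_n$, and more precisely $\det(A) = a_n \cdot M_{nn} + (\text{terms not involving } a_n)$, where $M_{nn}$ is the $(n,n)$ minor. That minor is the determinant of an upper triangular matrix with $1$'s on the diagonal, hence $M_{nn} = 1$. So it suffices to show that the remaining contribution — the determinant of the matrix $A'$ obtained from $A$ by setting $a_n = 0$ — is nonnegative. Expanding $\det(A')$ along its last column, only the entries $-a_{in}$ for $1 \leq i \leq n-1$ survive (the bottom entry is now $0$), and the corresponding $(n-1)\times(n-1)$ minors, after deleting row $i$ and column $n$, turn out to be block-lower-triangular: the top-left block is upper triangular of size $i-1$ with $1$'s on the diagonal, and the bottom-right block is precisely a matrix in $\MS_{n-i}$ (with the appropriate sign bookkeeping from the cofactor expansion working out in its favour). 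By the induction hypothesis each such minor has positive determinant, and since each coefficient $-a_{in}$ is multiplied against a quantity whose sign, after accounting for the $(-1)^{i+n}$ cofactor signs and the permutation of the block structure, is nonnegative, we get $\det(A') \geq 0$. Combining, $\det(A) = a_n + \det(A') \geq a_n > 0$.

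The main obstacle I expect is the careful sign and index bookkeeping in the cofactor expansion of $\det(A')$: one must verify that after deleting row $i$ and the last column, the resulting matrix genuinely decomposes as claimed and that its determinant, when multiplied by $-a_{in}$ and the cofactor sign $(-1)^{i+n}$, contributes with the correct (nonnegative) sign. An alternative, and perhaps cleaner, route that avoids this is a \emph{double} induction or a direct cofactor expansion along the \emph{first column}: expanding along column $1$ gives $\det(A) = 1 \cdot \det(B) + (-1)^{n+1} a_1 \cdot \det(C)$, where $B$ is obtained by deleting row $1$ and column $1$ and is itself (after relabeling) of the form in $\MS_{n-1}$, while $\det(C)$ — the minor from the bottom row — can be shown nonnegative by a separate, easier induction because $C$ is upper triangular except for one row. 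I would carry out whichever of these two expansions makes the recursive structure most transparent; the first-column expansion has the advantage that the surviving minor $B$ lies in $\MS_{n-1}$ essentially on the nose, so the induction hypothesis applies immediately.
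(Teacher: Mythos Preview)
Your primary approach --- expanding $\det(A')$ along the last column and claiming that each minor $M_{in}$ (delete row $i$, column $n$) is block-lower-triangular with bottom-right block in $\MS_{n-i}$ --- does not hold as stated. After deleting row $i$ and column $n$, the last row of the minor is still $(a_1,\ldots,a_{n-1})$, so the bottom-left block is \emph{not} zero; and the bottom-right $(n-i)\times(n-i)$ block has its $1$'s on the \emph{super}diagonal (old row $i+k$ has its $1$ in column $i+k$, which is the $(k{+}1)$-st column of the block), so it is not of $\MS_{n-i}$ shape. A correct analysis of $M_{in}$ actually uncovers an $\MS$-type matrix of size $i$, not $n-i$: after using the $(n{-}1{-}i)$ unit-triangular rows to clear the last $n{-}1{-}i$ columns and then cycling the bottom row up, one finds $M_{in}=(-1)^{n-1-i}\det\bigl[\begin{smallmatrix}U & v\\ w & a_i\end{smallmatrix}\bigr]$, the bracketed $i\times i$ matrix having the $\MS_i$ pattern (with $a_i\geq 0$ allowed). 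The signs then conspire to give $\det(A')=\sum_{i<n} a_{in}\,\alpha_i\geq 0$, but this is a different decomposition from the one you describe.

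Your alternative route via the first column is sound: the minor $B$ obtained by deleting row $1$ and column $1$ lies in $\MS_{n-1}$ on the nose, and the other minor $C$ (delete row $n$, column $1$) satisfies $(-1)^{n-1}\det(C)\geq 0$ by an easy separate induction --- expanding $C$ along \emph{its} first column produces two smaller matrices of the same shifted-Hessenberg shape. So this version of your argument is correct.

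The paper takes a third and more direct route than either of yours. It performs a single row reduction: defining $\alpha_1=a_1$ and $\alpha_j=a_j+\sum_{i<j}\alpha_i a_{ij}$ recursively, one checks that subtracting $\sum_{j=1}^{n-1}\alpha_j d_j$ from the last row $d_n$ zeroes its first $n{-}1$ entries and leaves $\alpha_n$ in position $(n,n)$, so the matrix becomes upper triangular and $\det(A)=\alpha_n\geq a_n>0$. This avoids induction entirely and yields an explicit formula for the determinant; your first-column induction reaches the same conclusion with more bookkeeping, while your last-column approach would need the correction above before it goes through.
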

\begin{proof}
First we define a sequence of non-negative integers $(\alpha_j)_{j=1,\ldots,n}$ as follows: $\alpha_1=a_1\geq 0$ and for any $2\leq j\leq n$
$$\alpha_j= a_j+\sum_{i=1}^{j-1} \alpha_i a_{ij}.$$
Clearly, $\alpha_i\geq a_i\geq 0$ for all $1\leq i\leq n$ and in particular  $\alpha_n\geq a_n>0.$ Now  we denote by $d_i$ the $i$-th row of the matrix $A$. Let $\bar{A}$ be the matrix obtained from $A$ by replacing $d_n$ by $d_n -\sum_{j=1}^{n-1}\alpha_j d_j$. Then  
\begin{align*}
\bar{A}=\bmt{1& -a_{12}&\cdots&-a_{1n-1}&-a_{1n}\\ 0&1&\cdots&-a_{2n-1}&-a_{2n}\\
	\vdots&\vdots&\ddots&\vdots&\vdots\\
0&0&\cdots&1&-a_{n-1n}\\
	0&0&\cdots&0&\alpha_n}.
\end{align*}
By the basic property of the determinant, we get
$\det(A)=\det(\bar{A})=\alpha_n>0.$
\end{proof}

\section{The WLP for a class of Artinian Gorenstein algebras of codimension three}\label{Section3}
From now on, we study the WLP of  the ideal  in the following setting.
\begin{Data}\label{IdealGorenstein}
Let $R=K[x,y,z]$ be the standard graded polynomial ring over a field $K$ of characteristic zero and consider the ideal
\begin{align*}
I=(x^a, y^b-x^{b-\gamma} z^\gamma, z^c, x^{a-b+\gamma}y^{b-\beta}, y^{b-\beta}z^{c-\gamma})\subset R,
\end{align*}
where  $1\leq \beta \leq b-1,\ \max\{1, b-a+1\}\leq \gamma\leq \min\{b-1,c-1\}$ and $a\geq c\geq 2.$ 
\end{Data}
It is clear that $a\geq b-c+2$. Firstly, we need the following result.

\begin{pro} \cite[Proposition 3.1]{MRT2019}\label{Proposition3.1}
 Let  $I_\beta:=I$ be as in Setting~\ref{IdealGorenstein}.  Set $A_\beta=R/I_\beta$ and $G= R/\aa$, where $\aa:= (x^a, y^b-x^{b-\gamma} z^{\gamma}, z^c)$. Then one has:
\begin{enumerate}
\item[\rm (i)] $I_\beta=\aa \colon_R y^\beta.$ Therefore, $A_\beta$ is an Artinian Gorenstein algebra of codimension three and its socle degree is $D=a+b+c-\beta-3.$
\item[\rm (ii)] $G$ is an Artinian complete intersection of codimension three with socle degree $a+b+c-3$ and hence it has the WLP. For all $2\leq \beta\leq b-1$
$$ A_1 =\frac{G}{(0\colon_{G} y)}\quad \text{and}\quad A_\beta =\frac{A_{\beta -1}}{(0\colon_{A_{\beta-1}} y)}.$$
\end{enumerate}
\end{pro}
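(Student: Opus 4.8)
The plan is to derive (ii) from (i) and concentrate the real effort on (i).

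For (ii): the polynomials $x^a,\ y^b-x^{b-\gamma}z^\gamma,\ z^c$ generate an ideal with radical $(x,y,z)$ (reducing the binomial modulo $(x,z)$ leaves $y^b$), so $R/\aa$ is Artinian; since $R$ is Cohen--Macaulay of dimension three and $\aa$ has height three, these three polynomials form a regular sequence, whence $G=R/\aa$ is a graded Artinian complete intersection of codimension three with generators in degrees $a,b,c$, hence of socle degree $a+b+c-3$. Codimension‑three complete intersections over a field of characteristic zero have the WLP by \cite{HMNW2003}. For the two displayed identities I would invoke the formal facts $\aa:_R y^\beta=(\aa:_R y^{\beta-1}):_R y$ and $R/(J:_R\ell)\cong (R/J)/(0:_{R/J}\ell)$, valid for any ideal $J$ and element $\ell$; together with (i) (and the convention $I_0:=\aa$) these give $A_1=R/(\aa:_R y)=G/(0:_G y)$ and $A_\beta=R/(I_{\beta-1}:_R y)=A_{\beta-1}/(0:_{A_{\beta-1}}y)$.

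Turning to (i), the inclusion $I_\beta\subseteq\aa:_R y^\beta$ is a direct substitution: $y^\beta$ kills $x^a$, $y^b-x^{b-\gamma}z^\gamma$, $z^c$ modulo $\aa$ for trivial reasons, while
\begin{align*}
y^\beta\cdot x^{a-b+\gamma}y^{b-\beta}&=x^{a-b+\gamma}\big(y^b-x^{b-\gamma}z^\gamma\big)+x^az^\gamma,\\
y^\beta\cdot y^{b-\beta}z^{c-\gamma}&=\big(y^b-x^{b-\gamma}z^\gamma\big)z^{c-\gamma}+x^{b-\gamma}z^c,
\end{align*}
both of which lie in $\aa$; the rewriting uses $a-b+\gamma\ge 1$, $1\le\gamma\le b-1$ and $\gamma\le c-1$.

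The reverse inclusion $\aa:_R y^\beta\subseteq I_\beta$ is the crux. Since $\aa\subseteq I_\beta$ one may pass to $G$, where $I_\beta/\aa=y^{b-\beta}(x^{a-b+\gamma},z^{c-\gamma})\,G$, so the task becomes $(0:_G y^\beta)=y^{b-\beta}(x^{a-b+\gamma},z^{c-\gamma})\,G$. One concrete route uses that the regular sequence defining $G$ has initial ideal $(x^a,y^b,z^c)$ (of equal colength $abc$), so $G$ has the monomial $K$‑basis $\{x^iy^jz^k:i<a,\ j<b,\ k<c\}$; multiplying a basis monomial by $y^\beta$ and reducing through $y^b\equiv x^{b-\gamma}z^\gamma$, $x^a\equiv0$, $z^c\equiv0$ then identifies exactly the annihilated combinations. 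A more conceptual route is linkage: $\aa:_R y^\beta=\aa:_R\mathfrak c$ with $\mathfrak c:=\aa+(y^\beta)=(x^a,z^c,y^\beta,x^{b-\gamma}z^\gamma)$ an Artinian monomial ideal (the hypothesis $\beta\le b-1$ is what absorbs $y^b$), so by the theory of linkage $R/(\aa:_R\mathfrak c)$ is Artinian with canonical module $(\mathfrak c/\aa)(a+b+c-3)$; since $\mathfrak c/\aa$ is cyclic, generated by the class of $y^\beta$ and hence isomorphic to $(R/(\aa:_R y^\beta))(-\beta)$, the algebra $R/(\aa:_R y^\beta)$ is Gorenstein of socle degree $a+b+c-\beta-3$ — this is precisely the ``therefore'' of (i). It remains to promote $I_\beta\subseteq\aa:_R y^\beta$ to an equality, which I would do by a colength count: $\ell\big(R/(\aa:_R y^\beta)\big)=\ell(\mathfrak c/\aa)=abc-\ell(R/\mathfrak c)=abc-\beta\big(ac-(a-b+\gamma)(c-\gamma)\big)$ must agree with $\ell(R/I_\beta)=abc-\ell(I_\beta/\aa)$. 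I expect this last comparison — computing $\ell(I_\beta/\aa)$, equivalently bookkeeping every reduction of a $y^\beta$‑multiple of a basis monomial of $G$ across all ranges of the exponents $i,j,k$ — to be the genuine obstacle, since that is where the non‑monomial nature of $\aa$, carried by the binomial $y^b-x^{b-\gamma}z^\gamma$, actually has to be confronted.
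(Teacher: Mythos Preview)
The paper does not supply a proof of this proposition; it is quoted from \cite[Proposition~3.1]{MRT2019}, so there is no argument in the present paper to compare your attempt against.

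That said, your approach is sound and nearly complete. The linkage argument for (i) is elegant: identifying $\aa:_R y^\beta=\aa:_R\mathfrak{c}$ with $\mathfrak{c}=(x^a,z^c,y^\beta,x^{b-\gamma}z^\gamma)$ and using that $\mathfrak{c}/\aa$ is cyclic gives the Gorensteinness and the socle degree cleanly. You flag the colength comparison $\ell(R/I_\beta)=\ell(R/(\aa:_R y^\beta))$ as the remaining obstacle, and you are right that this is where the work lies \emph{if you insist on that route}. But your own ``concrete route'' actually closes the gap more easily than you seem to expect. On the monomial basis $\{x^iy^jz^k:i<a,\ j<b,\ k<c\}$ of $G$, multiplication by $y^\beta$ sends each basis monomial either to zero or, via at most one rewrite $y^b\mapsto x^{b-\gamma}z^\gamma$, to another basis monomial: if $j+\beta<b$ the image is $x^iy^{j+\beta}z^k$, and if $j+\beta\ge b$ it is $x^{i+b-\gamma}y^{j+\beta-b}z^{k+\gamma}$ (nonzero precisely when $i<a-b+\gamma$ and $k<c-\gamma$). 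These two families of nonzero images cannot overlap, since equating $y$-exponents would force a negative exponent. Hence $\times y^\beta$ is injective on the span of the non-annihilated basis monomials, so $0:_G y^\beta$ is $K$-spanned by the annihilated ones, namely those with $j\ge b-\beta$ and either $i\ge a-b+\gamma$ or $k\ge c-\gamma$. As an ideal of $G$ this is visibly $(x^{a-b+\gamma}y^{b-\beta},\ y^{b-\beta}z^{c-\gamma})G$, which lifts to $I_\beta$ in $R$. No separate length count is needed. Part (ii) is fine as you wrote it.
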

\begin{Rem} \label{Remark3.3}
Let  $I_\beta:=I$ be as in Setting~\ref{IdealGorenstein}. Set $A_\beta=R/I_\beta$ and $k=\lfloor\frac{a+b+c-\beta-3}{2}\rfloor$.
\begin{enumerate}
\item[(i)] Let $u$ and $v$ be two integers such that $1\leq u\leq v\leq b-1$. To prove $A_\beta$ has the WLP for any $u\leq \beta\leq v$, by Propositions~\ref{Proposition2.7} and~\ref{Proposition3.1}, it is enough to show that $A_\beta$ has the WLP with integers $\beta$ satisfying $a+b+c-\beta$ is even and $\beta=u$ whenever $a+b+c-u$ is odd.
\item[(ii)] Set $L=x-y-z$. By  \cite[Proposition 2.1]{MMN2011}), $A_\beta$ has the WLP if and only if   $[A_\beta/LA_\beta]_{k+1}=0$. Set
$$J=((y+z)^{a}, y^b-(y+z)^{b-\gamma} z^\gamma, z^{c}, (y+z)^{a-b+\gamma}y^{b-\beta}, y^{b-\beta}z^{c-\gamma}).$$
 Then $A_\beta/LA_\beta\cong K[y,z]/J$, and hence  $A_\beta$ has the WLP if and only if   $[K[y,z]/J]_{k+1}=0$; or equivalently, $y^{k+1-i}z^i\in J$ for all $0\leq i\leq k+1.$
\item[(iii)] Since $\frac{a+b+c-\beta-3}{2}+1-c =\frac{a-c+b-1-\beta}{2}\geq 0$,  $k+1\geq c$. It follows that $y^{k+1-i}z^i\in J$ for any $c\leq i\leq k+1$ as $z^c\in J.$
\end{enumerate}
\end{Rem}

We now prove our first main result in this section.

\begin{Theo}\label{Theorem3.7}
Let  $I$ be as in Setting~\ref{IdealGorenstein}.  Then  $R/I$ has the WLP, whenever $\max\{1,b+c-a-1\}\leq \beta \leq b-1$ and $\gamma\geq \lfloor\frac{\beta-a+b+c-2}{2}\rfloor.$
\end{Theo}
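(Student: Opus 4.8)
The plan is to reduce, via Remark~\ref{Remark3.3}, the statement to a purely monomial membership problem: writing $k=\lfloor\frac{a+b+c-\beta-3}{2}\rfloor$ and $L=x-y-z$, it suffices to show $y^{k+1-i}z^{i}\in J$ for all $0\leq i\leq k+1$, where
\begin{align*}
J=\bigl((y+z)^{a},\ y^{b}-(y+z)^{b-\gamma}z^{\gamma},\ z^{c},\ (y+z)^{a-b+\gamma}y^{b-\beta},\ y^{b-\beta}z^{c-\gamma}\bigr)\subset K[y,z].
\end{align*}
By Remark~\ref{Remark3.3}(iii) the cases $c\leq i\leq k+1$ are already handled by $z^{c}\in J$, so the real work is the range $0\leq i\leq c-1$. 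Here I would first use the generator $y^{b-\beta}z^{c-\gamma}$: for $i\geq c-\gamma$ the monomial $y^{k+1-i}z^{i}$ is divisible by $y^{b-\beta}z^{c-\gamma}$ provided $k+1-i\geq b-\beta$, i.e. $i\leq k+1-b+\beta$; the arithmetic condition $\gamma\geq\lfloor\frac{\beta-a+b+c-2}{2}\rfloor$ is exactly what makes the two sub-intervals $\{c-\gamma\leq i\}$ and $\{i\leq k+1-b+\beta\}$ overlap and cover $\{c-\gamma,\dots,c-1\}$ together with $z^{c}\in J$. This disposes of all $i$ with $c-\gamma\le i\le k+1$.

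It remains to treat $0\leq i\leq c-\gamma-1$, equivalently the monomials $y^{k+1-i}z^{i}$ with many $y$'s and few $z$'s. The idea is to peel off powers of $z$ one at a time using the binomial relation $y^{b}\equiv (y+z)^{b-\gamma}z^{\gamma}\pmod J$. Concretely, since $k+1-i\geq b$ in this range (this is where $\beta\le b-1$ and the lower bound $\beta\ge b+c-a-1$ feed in, guaranteeing $k+1\ge b+\gamma$ and so $k+1-i\ge b+\gamma-i>b$), I can factor $y^{k+1-i}z^{i}=y^{k+1-i-b}\cdot y^{b}\cdot z^{i}$ and replace $y^{b}$ by $(y+z)^{b-\gamma}z^{\gamma}$, obtaining a sum of monomials each divisible by $z^{\gamma+i}$. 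If $\gamma+i\geq c-\gamma$ we are back in the already-solved range after a single substitution; otherwise one iterates, each step raising the exponent of $z$ by $\gamma$ while keeping the monomial inside $J$ modulo the previous step, until the $z$-exponent reaches $c-\gamma$ (hence the $y^{b-\beta}z^{c-\gamma}$ generator applies) or $c$ (hence $z^{c}$ applies). Keeping track that the $y$-exponent stays $\geq b-\beta$ throughout each replacement is where I would invoke $a\ge c$, $\gamma\le c-1$ and $\beta\ge b+c-a-1$ carefully.

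The main obstacle I anticipate is the bookkeeping in this iterative descent: after each substitution $y^{b}\rightsquigarrow (y+z)^{b-\gamma}z^{\gamma}$ one gets a \emph{sum} of monomials $\binom{b-\gamma}{t}y^{t}z^{b-\gamma-t+\gamma+\cdots}$, and for the induction to close I need every term in that sum to either already lie in $J$ or still have $y$-exponent large enough to repeat the process and $z$-exponent strictly larger than before. Making this precise likely requires organizing the argument around the quantity $b-\beta$ (the minimal $y$-exponent that lets us use $y^{b-\beta}z^{c-\gamma}$) and showing by induction on the $z$-exponent that $y^{e}z^{j}\in J$ whenever $e\geq b-\beta$ and $e+j=k+1$, handling the boundary $j<c-\gamma$ by one binomial step and $j\geq c-\gamma$ directly; the linear-algebra Lemma~\ref{Lemmadeterminant} is presumably not needed for this particular theorem but rather for the companion Theorem~\ref{Theorem3.8}. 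Once the monomial membership is established for all $0\le i\le k+1$, Remark~\ref{Remark3.3}(ii) gives $[K[y,z]/J]_{k+1}=0$, hence $R/I$ has the WLP, and Proposition~\ref{Proposition3.1}(ii) together with Remark~\ref{Remark3.3}(i) lets us pass from the parity-restricted values of $\beta$ to the full interval $\max\{1,b+c-a-1\}\leq\beta\leq b-1$.
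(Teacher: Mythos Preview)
Your overall plan matches the paper's: reduce via Remark~\ref{Remark3.3}, kill $c\le i\le k+1$ with $z^{c}$, kill $c-\gamma\le i\le c-1$ with $y^{b-\beta}z^{c-\gamma}$, and attack $0\le i\le c-\gamma-1$ with the relation $y^{b}\equiv(y+z)^{b-\gamma}z^{\gamma}$. However, there is a genuine gap at the boundary value of $\gamma$, and your remark that Lemma~\ref{Lemmadeterminant} is ``presumably not needed for this particular theorem'' is exactly where the argument breaks.

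Concretely, take $a+b+c-\beta$ even (so $k=\tfrac{a+b+c-\beta-4}{2}$) and $\gamma$ equal to the lower bound $\tfrac{\beta-a+b+c-2}{2}$. Then a direct computation gives $k+2-b=c-\gamma-1$, so for $i=c-\gamma-1$ one has $k+1-i=b-1<b$. Thus your claim that ``$k+1-i\ge b$ in this range'' fails for the last value of $i$, and you cannot factor out $y^{b}$ to start the iteration. Your proposed induction on the $z$-exponent therefore leaves the single monomial $y^{b-1}z^{\,c-\gamma-1}$ unaccounted for. The same difficulty arises at the other boundary $\beta=b+c-a-1$: there $k+1=a$ and again one monomial escapes the purely triangular elimination.

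The paper closes this gap by bringing in one further generator, namely $(y+z)^{a-b+\gamma}y^{b-\beta}$ (respectively $(y+z)^{a}$ when $\beta=b+c-a-1$). Its binomial expansion contributes exactly one additional linear relation among the monomials $y^{k+1},y^{k}z,\ldots,y^{b-1}z^{k+2-b}$, producing a square system whose coefficient matrix has the shape of Lemma~\ref{Lemmadeterminant} (upper triangular with $1$'s on the diagonal except for a last row of nonnegative entries with positive final entry). Lemma~\ref{Lemmadeterminant} then gives nonzero determinant and Cramer's rule finishes. So Lemma~\ref{Lemmadeterminant} is essential precisely at the extremal values of $\gamma$ and $\beta$ allowed by the hypotheses; your iterative substitution argument is only valid in the strict interior $2\gamma\ge\beta-a+b+c$, which is the paper's ``Subcase~1''.
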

Note that if  $\beta \leq a-b-c+5$, then the assumption $\gamma\geq \lfloor\frac{\beta-a+b+c-2}{2}\rfloor$ is automatically satisfied.
\begin{proof}
Set $I_\beta:=I$ and $A_\beta =R/I_\beta$. By Remark~\ref{Remark3.3}(i), it is enough to show that $A_\beta$ has the WLP,  provided $\max\{1,b+c-a\}\leq \beta \leq b-1,\, a+b+c-\beta$ is even and $\gamma\geq \frac{\beta-a+b+c-2}{2}$; or $\beta=b+c-a-1\geq 1$ and $\gamma\geq \beta-1$.  Under these conditions, set $k=\lfloor\frac{a+b+c-\beta-3}{2}\rfloor$ and  $L=x-y-z$.  By (ii) and (iii) of Remark~\ref{Remark3.3}, it suffices to show  that  
$y^{k+1-i}z^i  \in J$  for any $0\leq i\leq c-1,$ where 
$$J=((y+z)^{a}, y^b-(y+z)^{b-\gamma} z^\gamma, z^{c}, (y+z)^{a-b+\gamma}y^{b-\beta}, y^{b-\beta}z^{c-\gamma}).$$
To do it, we consider the following two cases.

\noindent{\bf \underline{Case 1: $\max\{1,b+c-a\}\leq \beta\leq b-1, a+b+c-\beta$ is even and $2\gamma\geq \beta-a+b+c-2$.}} In this case, one has $k=\frac{a+b+c-\beta-4}{2}$. For each $c-\gamma\leq i\leq c-1$, one has 
$$k+1-b+\beta-i\geq k+2-b-c+\beta =\frac{a-b-c+\beta}{2}\geq 0,$$
as $\beta \geq b+c-a$. Hence   
\begin{align*}
y^{k+1-i} z^{i} =y^{b-\beta}z^{c-\gamma} y^{k+1-b+\beta-i}z^{i-c+\gamma} \in J
\end{align*}
since $y^{b-\beta}z^{c-\gamma}\in J$. 

It follows that $y^{k+1-i}z^i\in J$ for any $c-\gamma\leq i\leq k+1.$  It remains to show the following  assertion.

\noindent\textsc{ \underline{Claim:}} $y^{k+1-i}z^i\in J$ for all $0\leq i\leq c-\gamma-1.$

We prove this claim  by dividing into the following two subcases.

\noindent{\bf \underline{Subcase 1:  $2\gamma\geq \beta-a+b+c$.}} In this case, one has
\begin{align*}
k+2-b-c+\gamma=\frac{a-b-c-\beta+2\gamma}{2}\geq 0.
\end{align*}
Hence $k+2-b\geq c-\gamma.$ It follows that $y^{k+1-i}z^i\in J$ for any $k+2-b\leq i\leq k+1.$ Clearly, $k+1\geq b$ as $k+2-b\geq c-\gamma\geq 1.$  Furthermore, as $y^b-(y+z)^{b-\gamma}z^\gamma\in J$ and by the Newton binomial expansion formula
$$y^b-(y+z)^{b-\gamma}z^\gamma=y^b-\sum_{j=0}^{b-\gamma}\binom{b-\gamma}{j}y^{b-\gamma-j}z^{\gamma+j},$$
we get 
$$ y^{k+1-i}z^i-\sum_{j=0}^{b-\gamma}\binom{b-\gamma}{j}y^{k+1-\gamma-i-j}z^{\gamma+i+j} = (y^b-(y+z)^{b-\gamma}z^\gamma)y^{k+1-b-i}z^i\in J$$
for any $0\leq i\leq k+1-b$. This gives a system of  $(k+2-b)$ equations in $(k+2-b)$ indeterminates $y^{k+1}, y^{k}z,\ldots, y^{b}z^{k+1-b}$ as follows
\begin{align*}
M X=U,
\end{align*}
where $X^t =\bmt{y^{k+1} & y^{k}z&\cdots& y^{b}z^{k+1-b}},\; U^t=\bmt{u_1&u_2&\cdots &u_{k+2-b}}$ with  $u_i\in J$ and
the coefficient matrix $M$ is an upper triangular matrix such that the entries on the main diagonal are all 1. Thus, by Cramer's rule,  $y^{k+1}, y^{k}z,\ldots, y^{b}z^{k+1-b}\in J.$

\noindent{\bf \underline{Subcase 2: $2\gamma= \beta-a+b+c-2$.}} In this case, we have
\begin{align*}
k+3-b-c+\gamma=\frac{a-b-c-\beta+2+2\gamma}{2}= 0.
\end{align*}
Hence $k+3-b= c-\gamma.$ It follows that $y^{k+1-i}z^i\in J$ for any $k+3-b\leq i\leq k+1.$ Furthermore, as $y^b-(y+z)^{b-\gamma}z^\gamma\in J$ and by the Newton binomial expansion formula
$$y^b-(y+z)^{b-\gamma}z^\gamma=y^b-\sum_{j=0}^{b-\gamma}\binom{b-\gamma}{j}y^{b-\gamma-j}z^{\gamma+j},$$
we get 
$$ y^{k+1-i}z^i-\sum_{j=0}^{b-\gamma}\binom{b-\gamma}{j}y^{k+1-\gamma-i-j}z^{\gamma+i+j} = (y^b-(y+z)^{b-\gamma}z^\gamma)y^{k+1-b-i}z^i\in J$$
for any $0\leq i\leq k+1-b$. On the other hand, $k+1=a-\beta+\gamma$ and  $(y+z)^{a-b+\gamma}y^{b-\beta}\in J$. Again by the Newton binomial expansion formula
\begin{align*}
(y+z)^{a-b+\gamma}y^{b-\beta}=\sum_{j=0}^{a-b+\gamma} \binom{a-b+\gamma}{j}y^{a-\beta+\gamma-j}z^j\in J.
\end{align*}
It follows that we can establish  a system of  $(k+3-b)$ equations in $(k+3-b)$ indeterminates $y^{k+1}, y^{k}z,\ldots, y^{b-1}z^{k+2-b}$ as follows
\begin{align*}
M X=U,
\end{align*}
where $X^t =\bmt{y^{k+1} & y^{k}z&\cdots& y^{b-1}z^{k+2-b}},\; U^t=\bmt{u_1&u_2&\cdots &u_{k+3-b}}$ with  $u_i\in J$ and
the coefficient matrix 
\begin{align*}
M=\bmt{1&0&\cdots&-1&-\binom{b-\gamma}{1}&\cdots&-\binom{b-\gamma}{b-\gamma}&\cdots&0&0\\
	0&1&\cdots&0&-1&\cdots&-\binom{b-\gamma}{b-\gamma-1}&\cdots&0&0\\
	\vdots&\vdots&\cdots&\vdots&\vdots&\cdots&\vdots&\cdots&\vdots&\vdots\\
	0&0&\cdots&0&0&\cdots&0&\cdots&1&0\\
	1&\binom{a-b+\gamma}{1}&\cdots&\binom{a-b+\gamma}{b}&\binom{a-b+\gamma}{\gamma+1}&\cdots&\binom{a-b+\gamma}{b}&\cdots
	&\binom{a-b+\gamma}{k+1-b}&\binom{a-b+\gamma}{k+2-b} }.
\end{align*} 
It follows that
$M\in  \MS_{k+3-b}$, hence $\det(M)>0$ by Lemma~\ref{Lemmadeterminant}. Thus $y^{k+1-i}z^i\in J$ for any $0\leq i\leq k+2-b$ by Cramer's rule.


\noindent{\bf \underline{Case 2: $\beta=b-a+c-1\geq 1$ and $\gamma\geq \beta-1$.}} In this case, one has $k=a-1$.  We must prove that  $y^{a-i}z^i\in J$ for all $0\leq i\leq c-1.$ As $y^{b-\beta}z^{c-\gamma}=y^{a-c+1}z^{c-\gamma}\in J$, we get
$$y^{a-i}z^i =y^{a-c+1}z^{c-\gamma} y^{c-1-i}z^{i-c+\gamma}\in J\quad \text{for any}\; c-\gamma\leq i\leq c-1.$$
As $\gamma\geq \beta-1$, we get $a-b+2\geq c-\gamma$ and hence $a\geq b-1$ and $y^{a-i}z^i\in J$ for any $a-b+2\leq i\leq a.$ Now if $a=b-1$ then $\gamma=c-1$ and by using $(y+z)^a\in J$, we get $y^a\in J$. If $a\geq b$, then as $y^b-(y+z)^{b-\gamma}z^\gamma\in J$, we get 
$$ (y^b-(y+z)^{b-\gamma}z^\gamma)y^{a-b-i}z^i\in J$$
for any $0\leq i\leq a-b$. It is similar to the proof of Subcase 2 of Case 1, by using the conditions 
$$(y+z)^a\in J\quad \text{and}\quad  (y^b-(y+z)^{b-\gamma}z^\gamma)y^{a-b-i}z^i\in J$$
for any $0\leq i\leq a-b$ and the Newton binomial expansion formula, we obtain a system of $(a-b+2)$ equations in $(a-b+2)$ indeterminates $y^{a}, y^{a-1}z,\ldots, y^{b-1}z^{a-b+1}$ as follows
\begin{align*}
M X=U,
\end{align*}
where $X^t =\bmt{y^{a} & y^{a-1}z&\cdots& y^{b-1}z^{a-b+1}},\; U^t=\bmt{u_1&u_2&\cdots &u_{a-b+2}}$ with  $u_i\in J$ and
the coefficient matrix $M\in  \MS_{a-b+2}$. It follows that $\det(M)>0$ by Lemma~\ref{Lemmadeterminant}. Thus $y^{a-i}z^i\in J$ for any $0\leq i\leq a-b+1$ by Cramer's rule.
\end{proof}

\begin{Rem}\label{Remark3.5}
[{\bf Matrix method}] In the proof of the above theorem, to prove that $y^{k+1-i}z^i\in J$ for all $0\leq i\leq k+1,$ we first look for a subset $\mathcal{I}\subset  \mathcal{J}:=\{0,1,\ldots,k+1\}$  such that $y^{k+1-i}z^i\in J$ for every $i\in \mathcal{I}$. We then use the method of taking the Newton binomial expansion formula of known elements of $J$ and represent their relations in a square system of equations in indeterminates $y^{k+1-i}z^i$ for every $i\in  \mathcal{J}\setminus  \mathcal{I}$ with the known elements in $J$ such that its coefficients matrix is a non-singular matrix. Thus, $y^{k+1-i}z^i\in J$ for any $ i\in  \mathcal{J}\setminus \mathcal{I}$ by Cramer's rule. For simplicity, we call this process \textbf{matrix method}.
\end{Rem}

We now prove the second main result in this section.
\begin{Theo} \label{Theorem3.8}  
Let  $I$ be as in Setting~\ref{IdealGorenstein}.  If $a\leq 2b-c$, then $R/I$ has the WLP for any $|a-b|+c-1\leq \beta \leq b-1$. 
\end{Theo}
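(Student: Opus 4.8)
The plan is to follow the \textbf{matrix method} of Remark~\ref{Remark3.5}, mirroring the structure of the proof of Theorem~\ref{Theorem3.7}. By Remark~\ref{Remark3.3}(i) it suffices to treat the case where $a+b+c-\beta$ is even together with the single odd endpoint $\beta=|a-b|+c-1$; set $k=\lfloor\frac{a+b+c-\beta-3}{2}\rfloor$, $L=x-y-z$, and
$$J=((y+z)^{a}, y^b-(y+z)^{b-\gamma} z^\gamma, z^{c}, (y+z)^{a-b+\gamma}y^{b-\beta}, y^{b-\beta}z^{c-\gamma}).$$
By Remark~\ref{Remark3.3}(ii),(iii), the goal reduces to showing $y^{k+1-i}z^i\in J$ for all $0\leq i\leq c-1$. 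I would first split on the sign of $a-b$ (the hypothesis $|a-b|+c-1\le\beta$ unwinds differently in the two cases), and in each case use the monomial $y^{b-\beta}z^{c-\gamma}\in J$ to cover the top range $c-\gamma\le i\le k+1$: the inequality $\beta\ge |a-b|+c-1$ is exactly what guarantees $k+1-b+\beta-i\ge 0$ for $i\ge c-\gamma$, just as in Case~1 of Theorem~\ref{Theorem3.7}. One then needs the complementary claim that $y^{k+1-i}z^i\in J$ for $0\le i\le c-\gamma-1$.

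For that claim the key extra ingredient, not used in Theorem~\ref{Theorem3.7}, is the hypothesis $a\le 2b-c$: this forces $k+1\le$ (roughly) $b+\gamma-1$, so that the ``missing'' exponents $y^{k+1-i}z^i$ with $i<c-\gamma$ all have $y$-exponent $\ge b$ --- no wait, more carefully, it controls how many binomial relations coming from $y^b-(y+z)^{b-\gamma}z^\gamma$ are available versus how many unknowns must be solved for. The plan is: expand each available generator by the Newton binomial formula --- namely $(y^b-(y+z)^{b-\gamma}z^\gamma)y^{k+1-b-i}z^i$ for the relevant range of $i$, and if necessary $(y+z)^{a-b+\gamma}y^{b-\beta}$ and $(y+z)^a$ --- and assemble a square linear system $MX=U$ whose unknowns $X$ are the monomials $y^{k+1-i}z^i$ with $i$ in the not-yet-covered index set and whose right-hand side $U$ has all entries in $J$. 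The point is to arrange the rows so that $M$ is either upper-triangular with $1$'s on the diagonal (when the binomial relations alone suffice, as in Subcase~1 of Theorem~\ref{Theorem3.7}) or of the form $\MS_{m}$ handled by Lemma~\ref{Lemmadeterminant} (when the last row, coming from $(y+z)^{a-b+\gamma}y^{b-\beta}$ or $(y+z)^a$, is needed, as in Subcase~2). In either case $\det(M)>0$, so Cramer's rule gives $X$ with entries in $J$, finishing the claim.

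The main obstacle I anticipate is bookkeeping: getting the exact ranges right so that (a) the top-range argument via $y^{b-\beta}z^{c-\gamma}$ and the bottom-range linear system together exhaust $\{0,\dots,k+1\}$ with no gap and no overlap forcing an under-determined system, and (b) verifying that the assembled matrix genuinely lies in $\MS_m$ --- i.e.\ that all the off-diagonal entries above the diagonal are $\le 0$ and the bottom row is $\ge 0$ with positive last entry --- which requires checking that the binomial coefficients land in the claimed positions and that $a\le 2b-c$ prevents the expansion tails from colliding. I would also double-check the endpoint $\beta=|a-b|+c-1$ with $a+b+c-\beta$ odd separately, since there the parity shifts $k$ by the floor and one must confirm the degenerate system (possibly a single equation, as in the $a=b-1$ subcase of Theorem~\ref{Theorem3.7}) still closes up. Once the index arithmetic is pinned down, each piece is a routine application of Lemma~\ref{Lemmadeterminant} and Cramer's rule.
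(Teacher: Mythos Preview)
Your overall framework matches the paper's: reduce via Remark~\ref{Remark3.3}, cover the top range $c-\gamma\le i\le c-1$ using $y^{b-\beta}z^{c-\gamma}\in J$, and handle the bottom range $0\le i\le c-\gamma-1$ by a square linear system. The gap is in your choice of generator for that system. You propose to build the equations primarily from $(y^b-(y+z)^{b-\gamma}z^\gamma)y^{k+1-b-i}z^i$, but under the hypothesis $\beta\ge |a-b|+c$ (the even case) one computes $k+2-b=\frac{a-b+c-\beta}{2}\le 0$ regardless of the sign of $a-b$, so the exponent $k+1-b-i$ is negative for every $i\ge 0$ and you obtain \emph{no} equations at all from this relation (and at most one at the odd endpoint $\beta=|a-b|+c-1$). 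So your primary plan collapses; the element you list only as a fallback, $(y+z)^{a-b+\gamma}y^{b-\beta}$, is in fact the entire engine of the paper's argument.

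Concretely, the paper multiplies $(y+z)^{a-b+\gamma}y^{b-\beta}$ by $y^{k+1-(a-\beta+\gamma)-i}z^i$ for $0\le i\le c-\gamma-1$; the key inequality making the $y$-exponent nonnegative is $k+2-a+\beta-c=\frac{-a+b-c+\beta}{2}\ge 0$, which is exactly $\beta\ge a-b+c$ and follows from $\beta\ge |a-b|+c$. This produces $c-\gamma$ equations in the $c-\gamma$ unknowns $y^{k+1},\ldots,y^{k+2-c+\gamma}z^{c-\gamma-1}$, and the coefficient matrix is already upper triangular with $1$'s on the diagonal (the $(i,i')$ entry is $\binom{a-b+\gamma}{i'-i}$ for $i'\ge i$), so Lemma~\ref{Lemmadeterminant} is never needed here. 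Your diagnosis of the role of $a\le 2b-c$ is also off: it does not control the supply of $y^b$-relations; it simply guarantees that the interval $|a-b|+c-1\le\beta\le b-1$ is nonempty when $a\ge b$ (and that $b\ge c$). Once you swap in the correct generator, the bookkeeping you worry about is straightforward and the odd endpoint is handled identically.
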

 Notice that as $a\geq c$, the condition $a\leq 2b-c$ implies that $b\geq c.$
\begin{proof}
Set $k=\lfloor\frac{a+b+c-\beta-3}{2}\rfloor$ and  $L=x-y-z$.  By (ii) and (iii) of Remark~\ref{Remark3.3}, it suffices to show  that  if $|a-b|+c-1\leq \beta \leq b-1$, then 
$y^{k+1-i}z^i  \in J$  for any $0\leq i\leq c-1,$ 
where 
$J=((y+z)^a, y^b-(y+z)^{b-\gamma} z^{\gamma}, z^{c}, (y+z)^{a-b+\gamma}y^{b-\beta}, y^{b-\beta}z^{c-\gamma})$. 
To do it, again by  Remark~\ref{Remark3.3}(i), it is enough to prove that $y^{k+1-i}z^i  \in J$  for any $0\leq i\leq c-1$, provided   $|a-b|+c\leq \beta \leq b-1$ and $a+b+c-\beta$ is even; or $\beta=|a-b|+c-1\leq b-1$.

\noindent{\bf \underline{Case 1:  $|a-b|+c\leq \beta \leq b-1$ and $a+b+c-\beta$ is even.}} One has  $k=\frac{a+b+c-\beta-4}{2}.$ Now we go along the same lines as in the proof of Theorem~\ref{Theorem3.7}, we get $y^{k+1-i}z^i\in J$ for any $c-\gamma\leq i\leq c-1$. Thus, it remains to show that  $y^{k+1-i}z^i\in J$ for each $0\leq i\leq c-\gamma-1.$ 
This is proved by applying the matrix method in Remark~\ref{Remark3.5}  with the Newton binomial expansion formula of $(y+z)^{a-b+\gamma}y^{b-\beta}\in J$. Indeed, we have
\begin{align*}
\sum_{j=0}^{a-b+\gamma} \binom{a-b+\gamma}{j}y^{a-\beta+\gamma-j}z^j= (y+z)^{a-b+\gamma}y^{b-\beta} \in J.
\end{align*}
Notice that $k+1-(a-\beta+\gamma)-(c-\gamma-1) \geq 0.$ For each $0\leq i\leq c-\gamma-1$, 
\begin{align*}
\sum_{j=0}^{a-b+\gamma} \binom{a-b+\gamma}{j}y^{k+1-i-j}z^{i+j}= (y+z)^{a-b+\gamma}y^{b-\beta} y^{k+1-(a-\beta+\gamma)-i}z^i \in J.
\end{align*}
This gives a system of $(c-\gamma)$ equations with $(c-\gamma)$ indeterminates $y^{k+1}, y^{k}z,\ldots, y^{k+2-c+\gamma}z^{c-\gamma-1}$ as follows
 $$MX=U$$
 where $X^t=\bmt{y^{k+1}&\cdots& y^{k+2-c+\gamma}z^{c-\gamma-1}},\; U^t=\bmt{u_1&\cdots&u_{c-\gamma}}$ with $u_i\in J$ and the coefficient  matrix $M$ is an upper triangular matrix such that the entries on the main diagonal are all 1. Thus, by Cramer's rule,  $y^{k+1}, y^{k}z,\ldots, y^{k+2-c+\gamma}z^{c-\gamma-1}\in J.$

 \noindent{\bf \underline{Case 2:  $\beta=|a-b|+c-1\leq b-1$.}} In this case, one has $k+1=\min\{a,b\}\geq c$. 
 Now, for any $c-\gamma\leq i\leq c-1$, one has 
 \begin{align*}
 y^{k+1-i}z^i = y^{b-\beta}z^{c-\gamma}y^{k+1-b+\beta-i}z^{i-c+\gamma}\in J,
 \end{align*}
 as $k+1-b+\beta-i\geq \min\{a,b\}-b+|a-b|\geq 0.$    By applying the matrix method as in the proof of Case 1, we get $y^{k+1-i}z^i\in J$ for each $0\leq i\leq c-\gamma-1.$
\end{proof}

\begin{Rem} 
We now discuss cases that are not covered by Theorems~\ref{Theorem3.7} and~\ref{Theorem3.8}.
\begin{enumerate} 
\item[(a)] {\bf\underline{ Case 1:  $a\leq b$.}} Theorem~\ref{Theorem3.8} covers Theorem~\ref{Theorem3.7}; in other words, under this condition, we can omit the assumption $\gamma\geq\lfloor \frac{\beta-a+b+c-2}{2}\rfloor$ in Theorem~\ref{Theorem3.7}. Moreover, the cases $b\geq a\geq c$ and $1\leq \beta\leq b+c-a-2$ are not covered by Theorem~\ref{Theorem3.8} (and hence also by Theorem~\ref{Theorem3.7}). The first cases that are not covered by Theorem~\ref{Theorem3.8} correspond to $a=b=7, c=6, 1\leq \beta\leq 3$ and $1\leq \gamma\leq 5$. However, by using {\tt Macaulay2} \cite{Macaulay2}, we can check that the corresponding Gorenstein algebras have the WLP. 
\item[(b)]  {\bf\underline{ Case 2:  $a> b$.}} The cases $1\leq \beta\leq a-b+c-2$ and $1\leq \gamma\leq \lfloor \frac{\beta-a+b+c-4}{2}\rfloor$ are not covered by Theorems~\ref{Theorem3.7} and~\ref{Theorem3.8}. For example, the case $a=8, b=7, c=6, \beta=3$ and $\gamma=2$ is not covered Theorems~\ref{Theorem3.7} and~\ref{Theorem3.8}.  However, by using {\tt Macaulay2} \cite{Macaulay2}, we have checked that the corresponding Gorenstein algebra has the WLP.  
\end{enumerate}
\end{Rem}

As an application of  Theorems~\ref{Theorem3.7} and~\ref{Theorem3.8}, we can recover our result in \cite[Theorem 3.7]{MRT2019} in an easier way. 
\begin{Cor}\label{Corollary3.6}
Let  $I$ be as in Setting~\ref{IdealGorenstein}. If one of $a,b,c$ is equal to two, then $R/I$ has the WLP.
\end{Cor}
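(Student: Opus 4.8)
The plan is to reduce to the two main theorems by checking that the hypothesis ``one of $a,b,c$ is equal to two'' forces the parameters $(a,b,c,\beta,\gamma)$ into a range already covered by Theorem~\ref{Theorem3.7} or Theorem~\ref{Theorem3.8}. Recall from Setting~\ref{IdealGorenstein} that $a\ge c\ge 2$, $1\le\beta\le b-1$ and $\max\{1,b-a+1\}\le\gamma\le\min\{b-1,c-1\}$, and that $a\ge b-c+2$. First I would dispose of the case $c=2$: then $\gamma\le c-1=1$, so $\gamma=1$, and $b-a+1\le 1$ forces $a\ge b$; hence $|a-b|+c-1=a-b+1$. Since $a\le 2b-c=2b-2$ is equivalent to $a-b\le b-2$, i.e.\ to $a-b+1\le b-1$, and the lower bound on $\gamma$ gives $b-a+1\le 1$ automatically, one checks whether $a\le 2b-c$ holds: if it does, every $\beta$ with $a-b+1\le\beta\le b-1$ is covered by Theorem~\ref{Theorem3.8}; and the remaining small $\beta$ (those with $\beta< a-b+1$, which can only occur when $a>b$) fall under Theorem~\ref{Theorem3.7} since then $b+c-a-1=b+1-a\le\beta$ is vacuous or immediate and $\gamma=1\ge\lfloor\frac{\beta-a+b+c-2}{2}\rfloor=\lfloor\frac{\beta-a+b}{2}\rfloor$, the latter because $\beta\le b-1$ gives $\beta-a+b\le 2b-1-a\le$ small. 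I would write out this arithmetic carefully to confirm $\gamma=1$ always meets the floor bound here.

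Next I would treat $b=2$: then $1\le\beta\le b-1=1$ forces $\beta=1$, and $\gamma\le b-1=1$ forces $\gamma=1$; also $\max\{1,b-a+1\}=1\le\gamma$ is automatic. So we must show $R/I$ has the WLP when $b=\beta+1=2$, $\gamma=1$, $a\ge c\ge 2$. Here $|a-b|+c-1=|a-2|+c-1$. If $a\le 2b-c=4-c$, Theorem~\ref{Theorem3.8} applies provided $|a-2|+c-1\le\beta=1$, i.e.\ $|a-2|+c\le 2$; combined with $a\ge c\ge 2$ this pins down $(a,c)$ to $(2,2)$. For the generic sub-case one instead invokes Theorem~\ref{Theorem3.7}: with $\beta=1$ the condition $\max\{1,b+c-a-1\}\le\beta$ reads $c+1-a\le 1$, i.e.\ $a\ge c$, which holds; and $\gamma=1\ge\lfloor\frac{\beta-a+b+c-2}{2}\rfloor=\lfloor\frac{1-a+c}{2}\rfloor\le 0$ since $a\ge c$. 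So $b=2$ is entirely inside Theorem~\ref{Theorem3.7}.

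Finally, the case $a=2$: since $a\ge c\ge 2$ this forces $c=2$ as well, so it is a sub-case of the $c=2$ analysis above; moreover $a=2\ge b-c+2=b$ gives $b\le 2$, hence $b=2$ too, and everything degenerates to $a=b=c=2$, $\beta=\gamma=1$, which is covered (e.g.\ by Theorem~\ref{Theorem3.7}, whose hypotheses $\max\{1,b+c-a-1\}=1\le\beta=1$ and $\gamma=1\ge\lfloor\frac{1-2+2+2-2}{2}\rfloor=0$ both hold). The main obstacle I anticipate is purely bookkeeping: making sure that in each of the three cases the floor-function lower bound on $\gamma$ in Theorem~\ref{Theorem3.7} is genuinely satisfied by the forced value $\gamma=1$ across the full range of admissible $\beta$, and that no admissible $(\beta,\gamma)$ slips between the two theorems. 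I would organize the write-up as a short case split on which of $a,b,c$ equals $2$, in each case listing the forced constraints and citing the applicable theorem, so that the argument is a few lines of elementary inequalities with no new computation.
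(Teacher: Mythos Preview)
Your approach is correct and is exactly what the paper intends: Corollary~\ref{Corollary3.6} is stated there without proof as a direct application of Theorems~\ref{Theorem3.7} and~\ref{Theorem3.8}, and your case split verifies precisely that the forced constraints on $(\beta,\gamma)$ place every instance inside the hypotheses of one of those two theorems. One small point to tighten in the $c=2$ case: when you write ``the latter because $\beta\le b-1$ gives $\beta-a+b\le 2b-1-a\le$ small'', the bound you actually want for the small-$\beta$ range comes directly from $\beta<a-b+1$, which gives $\beta-a+b\le 0$ and hence $\lfloor(\beta-a+b)/2\rfloor\le 0$; and you should say explicitly that if $a>2b-2$ then every admissible $\beta\le b-1\le a-b$ already lies in this ``small'' range, so Theorem~\ref{Theorem3.7} alone suffices and Theorem~\ref{Theorem3.8} is not needed.
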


 The next result  gives a shorter and easier proof for our result in  \cite[Theorem ~3.15]{MRT2019}.
\begin{Cor}  \label{Corolarry3.9}
Let  $I$ be as in Setting~\ref{IdealGorenstein}. If one of $a,b,c$ is equal to three, then $R/I$ has the WLP.
\end{Cor}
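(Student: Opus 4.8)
The plan is to reduce the statement ``one of $a,b,c$ is equal to three'' to cases already covered by Theorems~\ref{Theorem3.7} and~\ref{Theorem3.8}, together with a short direct argument for the finitely many residual parameter choices. First I would split according to which of the three parameters equals $3$, using the fact that in Setting~\ref{IdealGorenstein} the roles of $x$ and $z$ (hence of $a$ and $c$) are essentially symmetric after the obvious change of variables, so the cases $a=3$ and $c=3$ can be treated together; keep in mind the standing hypothesis $a\ge c\ge 2$, so $c=3$ forces $a\ge 3$ and $a=3$ forces $c\in\{2,3\}$, while $b=3$ forces $1\le\beta\le 2$ and $1\le\gamma\le\min\{2,c-1\}$.

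Next I would dispose of the $b=3$ case: here $\beta\in\{1,2\}$ and the constraint $\gamma\le b-1=2$ together with $\gamma\ge\max\{1,b-a+1\}$ leaves very few $(a,c,\beta,\gamma)$, and for each one I would check that either the hypothesis of Theorem~\ref{Theorem3.8} ($a\le 2b-c=6-c$ and $|a-b|+c-1\le\beta$) or that of Theorem~\ref{Theorem3.7} ($\max\{1,b+c-a-1\}\le\beta$ and $\gamma\ge\lfloor\frac{\beta-a+b+c-2}{2}\rfloor$) is met; when $c=2$ one may instead quote Corollary~\ref{Corollary3.6}. For $c=3$ (so $a\ge 3$), I would again test the two theorems: the bound $\gamma\le\min\{b-1,c-1\}=\min\{b-1,2\}$ makes $\gamma$ small, so the inequality $\gamma\ge\lfloor\frac{\beta-a+b+c-2}{2}\rfloor$ of Theorem~\ref{Theorem3.7} holds precisely when $\beta$ is small relative to $a$, namely $\beta\le a-b+5$ roughly, which together with $\beta\ge\max\{1,b+c-a-1\}=\max\{1,b-a+2\}$ covers a large range; the complementary range of larger $\beta$ should fall under Theorem~\ref{Theorem3.8} when $a\le 2b-3$. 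The genuinely leftover configurations are those with $a>b$ and $\beta$ in the narrow band $b-a+c-1<\beta<$ (the Theorem~\ref{Theorem3.7} threshold), and since $c=3$ this band has bounded width, so only finitely many $(a,b,\beta,\gamma)$ survive up to the scaling coming from $a$; for these I would run the matrix method of Remark~\ref{Remark3.5} directly, exhibiting $y^{k+1-i}z^i\in J$ for $0\le i\le c-1=2$ by writing a $2\times2$ or $3\times3$ system whose coefficient matrix lies in $\MS_n$ and invoking Lemma~\ref{Lemmadeterminant}.

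The main obstacle I anticipate is not any single hard computation but the bookkeeping: verifying that the union of the regions covered by the two theorems (plus Corollary~\ref{Corollary3.6} when $c=2$) really exhausts all admissible $(a,b,\beta,\gamma)$ once one of the parameters is fixed to $3$, and isolating precisely which finitely many tuples need the hand-made matrix argument. Because $c=3$ caps $\gamma$ at $2$ and $b=3$ caps both $\beta$ and $\gamma$ at $2$, the residual set is small and each instance of the matrix method involves a system of size at most $3$, so the determinant positivity is immediate from Lemma~\ref{Lemmadeterminant}; the care is entirely in the case division. I would organize the write-up as: (1) reduce to $a=3$, $b=3$, $c=3$ by the $x\leftrightarrow z$ symmetry; (2) in each, partition the $\beta$-range at the Theorem~\ref{Theorem3.7} threshold and apply Theorems~\ref{Theorem3.7}, \ref{Theorem3.8}, or Corollary~\ref{Corollary3.6}; (3) finish the bounded leftover list by the matrix method.
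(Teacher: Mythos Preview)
Your overall strategy---reduce to $b=3$ or $c=3$, apply Theorems~\ref{Theorem3.7} and~\ref{Theorem3.8} wherever possible, and finish the residual cases by the matrix method of Remark~\ref{Remark3.5}---is exactly the paper's approach. The problem is that your identification of the residual cases is wrong in both branches, and this matters.

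For $b=3$: it is not true that the two theorems cover all $(a,c,\beta,\gamma)$. Take $a=c$ and $\beta=1$. Then $\max\{1,b+c-a-1\}=\max\{1,2\}=2>\beta$, so Theorem~\ref{Theorem3.7} does not apply; and Theorem~\ref{Theorem3.8} requires $|a-b|+c-1=a-3+a-1=2a-4\le\beta$ (for $a\ge3$), which fails for all $a\ge3$. This is an infinite family (one for each $a=c\ge3$), and the matrix method here produces a system of size $a$, not size at most $3$: you need $y^{a-i}z^i\in J$ for $0\le i\le c-1=a-1$, and you must use the relations $y^3-yz^2-z^3$, $y^2z^{a-2}$, $(y+z)^a\in J$ to build an $a\times a$ matrix in $\MS_a$ and invoke Lemma~\ref{Lemmadeterminant}. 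So your claim that ``each instance of the matrix method involves a system of size at most $3$'' is false for this branch.

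For $c=3$: the leftover is not ``$a>b$ with $\beta$ in a narrow band''; in fact for $a\ge b+1$ the two theorems together \emph{do} cover everything (Theorem~\ref{Theorem3.7} handles $1\le\beta\le a-b+1$ since then $\lfloor(\beta-a+b+1)/2\rfloor\le1\le\gamma$, and Theorem~\ref{Theorem3.8} handles $a-b+2\le\beta\le b-1$ when $a\le2b-3$, while for $a\ge2b-2$ the first range already exhausts $\beta$). The genuine residual is $a\in\{b-1,b\}$ with small $\beta$ (e.g.\ $\beta=1$ when $a=b$, and $\beta\in\{1,2\}$ when $a=b-1$), again an infinite family in $b$. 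Here the matrix systems really are of size at most $3$, so once you fix the case list the argument is short---but your current bookkeeping sends you to the wrong region.
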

\begin{proof}
It is assumed that $a\geq c$. It suffices to prove the theorem for the cases where $b=3$ or $c=3$.
	
\noindent{\bf \underline{Case 1: $b=3$.}}
If $a\geq c+1$, then  $R/I$ has the WLP by Theorem~\ref{Theorem3.7}. It remains to show that $R/I$ has the WLP whenever $a=c.$ Therefore, by symmetry of $x$ and $z$, we can assume $\gamma=2$. Since the socle degree of $R/I$ is $2a-\beta$, by Remark~\ref{Remark3.3}(i), it suffices to show that $R/I$ has the WLP for $\beta=1$. Set $k=a-1$ and $L=x-y-z$. We again apply the items (ii) and (iii) of Remark~\ref{Remark3.3}, it is enough to see that $y^{a-i}z^i\in J$ for all $0\leq i\leq a-1,$
where  $J=((y+z)^a, y^3-yz^2-z^3, z^a, (y+z)^{a-1}y^{2}, y^{2}z^{a-2}).$ 
Now applying the matrix method in Remark~\ref{Remark3.5} with elements $y^3-yz^2-z^3, y^2z^{a-2},(y+z)^a\in J$,  we get a system of $a$ equations with $a$ indeterminates $y^{a}, y^{a-1}z,\ldots, yz^{a-1}$ as follows
\begin{align*}
M X=U,
\end{align*}
where $X^t =\bmt{y^{a}&  \cdots &yz^{a-1}},\; U^t=\bmt{u_1&\cdots &u_{a}}$ with  $u_i\in J$ and 
$M\in \MS_{a}$. Hence, by Lemma~\ref{Lemmadeterminant}, $\det(M)>0$ and we get $y^{a-i}z^i\in J$ for any $0\leq i\leq a-1$.

 \noindent{\bf \underline{Case 2: $c=3$.}}
By Theorems~\ref{Theorem3.7} and~\ref{Theorem3.8},  $R/I$ has the WLP, whenever  $a\geq b+1$. Therefore we  consider the case where $a=b$ or $a=b-1$.  Set $k=\lfloor\frac{a+b-\beta}{2}\rfloor$ and $L=x-y-z$. By (ii) and (iii) of Remark~\ref{Remark3.3}, it is enough to see that $y^{k+1-i}z^i\in J$ for all $0\leq i\leq 2.$ However, this is simply proved by using the matrix method in Remark~\ref{Remark3.5}.
\end{proof}

We close this section by giving the following corollary.
\begin{Cor}\label{Corollary3.13}
Let  $I$ be as in Setting~\ref{IdealGorenstein}. Then $R/I$ has the WLP if one of the following conditions satisfies:
\begin{enumerate}
\item [\rm (a)] $a\geq 2b+c-6$;
\item [\rm (b)] $a\geq b+c-2$ and $1\leq \beta\leq a-b-c+5.$
\end{enumerate}
\end{Cor}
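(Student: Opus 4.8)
The plan is to reduce each of the two conditions to an application of Theorems~\ref{Theorem3.7} and~\ref{Theorem3.8} (together with the easy-range corollaries already proved), by a careful bookkeeping of the parameter inequalities defining Setting~\ref{IdealGorenstein}. Recall that we always have $1\leq \beta\leq b-1$, $\max\{1,b-a+1\}\leq\gamma\leq\min\{b-1,c-1\}$ and $a\geq c\geq 2$, and that the remark after Theorem~\ref{Theorem3.7} records that $\beta\leq a-b-c+5$ forces $\gamma\geq\lfloor\frac{\beta-a+b+c-2}{2}\rfloor$ automatically.

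For part (b), I would argue that the hypotheses $a\geq b+c-2$ and $1\leq\beta\leq a-b-c+5$ put us exactly in the situation covered by Theorem~\ref{Theorem3.7}(i): first, $a\geq b+c-2$ gives $b+c-a-1\leq 1\leq\beta$, so $\max\{1,b+c-a-1\}\leq\beta$ holds; second, $\beta\leq a-b-c+5$ is precisely the condition that, by the note following Theorem~\ref{Theorem3.7}, makes $\gamma\geq\lfloor\frac{\beta-a+b+c-2}{2}\rfloor$ hold for every admissible $\gamma$. Hence Theorem~\ref{Theorem3.7} applies verbatim and $R/I$ has the WLP.

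For part (a), assume $a\geq 2b+c-6$. The idea is to split on $\beta$ relative to the threshold $a-b-c+5$. If $1\leq\beta\leq a-b-c+5$, then since $a\geq 2b+c-6\geq b+c-2$ (because $b\geq 2$... more precisely $b\geq\gamma+1\geq 2$), we are in case (b), already handled. If instead $\beta\geq a-b-c+6$, I want to land in Theorem~\ref{Theorem3.7}(i) again: the bound $a\geq 2b+c-6$ rearranges to $b-1\leq a-b-c+5$, and combined with $\beta\leq b-1$ this would actually force $\beta\leq a-b-c+5$, contradicting $\beta\geq a-b-c+6$. So the second subcase is vacuous, and (a) follows from (b). (If $b$ is so small that this bookkeeping degenerates — e.g. $b\leq 5$ — one instead invokes part (v) of the main theorem, i.e. Theorems~\ref{Theorem4.3} and~\ref{Theorem5.3} for $b\in\{4,5\}$ and Corollaries~\ref{Corollary3.6}, \ref{Corolarry3.9} for $b\in\{2,3\}$; this is exactly the remark in the introduction that (iii) is a consequence of (i) and (v).)

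The only real subtlety — and the step I expect to double-check most carefully — is making sure the chain of inequalities "$a\geq 2b+c-6\Rightarrow b-1\leq a-b-c+5$" is used in the correct direction and that no off-by-one error creeps into the floor function in the hypothesis of Theorem~\ref{Theorem3.7}; the introduction's own remark ("the condition (iv) is a special case of (i) and the condition (iii) is a consequence of (i) and (v)") tells us this is the intended route, so the proof is genuinely just an inequality manipulation, with the small-$b$ cases peeled off to part (v). I would write it as: "Part (b) is an immediate consequence of Theorem~\ref{Theorem3.7}, since $a\geq b+c-2$ yields $\max\{1,b+c-a-1\}=1\leq\beta$, while $\beta\leq a-b-c+5$ forces $\gamma\geq\lfloor\frac{\beta-a+b+c-2}{2}\rfloor$ by the remark following that theorem. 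For part (a), if $b\leq 5$ apply part (v) of the Main Theorem; otherwise $a\geq 2b+c-6$ gives $b-1\leq a-b-c+5$, so any $\beta\leq b-1$ satisfies $\beta\leq a-b-c+5$ and we conclude by (b)."
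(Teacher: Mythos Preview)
Your treatment of (b) is correct and coincides with the paper's: $a\geq b+c-2$ gives $\max\{1,b+c-a-1\}=1\leq\beta$, and $\beta\leq a-b-c+5$ forces $\lfloor\frac{\beta-a+b+c-2}{2}\rfloor\leq 1\leq\gamma$, so Theorem~\ref{Theorem3.7} applies.

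For (a), however, your final write-up contains a genuine circularity. You peel off $b\leq 5$ and invoke ``part (v) of the Main Theorem'', i.e.\ Theorems~\ref{Theorem4.3} and~\ref{Theorem5.3} for $b\in\{4,5\}$. But those theorems are proved \emph{using} Corollary~\ref{Corollary3.13}(a): for instance, Proposition~\ref{Proposition4.2} opens with ``If $a\geq c+2$, then $R/I$ has the WLP by Corollary~\ref{Corollary3.13}(a)'', and $c+2=2b+c-6$ precisely when $b=4$; the analogous thing happens for $b=5$ in Proposition~\ref{Proposition5.2}. So you would be proving the $b=4,5$ cases of (a) by quoting results whose proofs assume exactly those cases. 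The sentence in the introduction you relied on (``(iii) is a consequence of (i) and (v)'') describes the logical containment among the \emph{conditions}, not the order in which proofs are given.

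The fix is one line, and it is what the paper actually does: only peel off $b\leq 3$ (via Corollaries~\ref{Corollary3.6} and~\ref{Corolarry3.9}), and for $b\geq 4$ run your own reduction to (b). Indeed, $a\geq 2b+c-6$ with $b\geq 4$ gives both $a\geq b+c-2$ and $b-1\leq a-b-c+5$, so every admissible $\beta$ already satisfies the hypotheses of (b). Your mid-argument claim that $2b+c-6\geq b+c-2$ follows from $b\geq 2$ is where the off-by-two slipped in; it needs $b\geq 4$, and once you correct that threshold the cases $b=4,5$ are absorbed into the main argument and no appeal to Section~\ref{Section4} or~\ref{Section5} is necessary. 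With that change your proof is correct and essentially identical to the paper's.
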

\begin{proof}
If $b\geq 4$ and $a\geq 2b+c-6$, then $b-a+c-1\leq 1$ and $\lfloor\frac{\beta-a+b+c-2}{2}\rfloor\leq 1$. It follows from Theorem~\ref{Theorem3.7} that $R/I$ has the WLP. Furthermore, if $b=2$ or $b=3$, then $R/I$ has the WLP by Corollaries~\ref{Corollary3.6} and~\ref{Corolarry3.9}, respectively. Thus, we completely prove  (a).

For the item (b), if $1\leq \beta\leq a-b-c+5$, then $\lfloor\frac{\beta-a+b+c-2}{2}\rfloor \leq 1.$ Hence $R/I$ has the WLP for any $\gamma\geq 1$ by Theorem~\ref{Theorem3.7}.
\end{proof}

\section{One of $a,b,c$ is equal to $4$} \label{Section4}
It is assumed that $a\geq c$. Therefore, we first consider the case where $c=4.$ We have the following.
\begin{pro} \label{Proposition4.1}
Assume that  $ a,b\geq 4, a\geq b-2$  and $\max\{1,b-a+1\}\leq \gamma\leq 3$. For any  $1\leq \beta\leq  b-1$, the ideal
$$I=(x^a, y^b-x^{b-\gamma} z^{\gamma}, z^4, x^{a-b+\gamma}y^{b-\beta}, y^{b-\beta}z^{4-\gamma})$$
has the WLP.
\end{pro}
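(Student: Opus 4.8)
The plan is to follow the strategy of Section~\ref{Section3}: reduce the WLP of $R/I$ to a vanishing statement in a two--variable ring, dispose of the bulk of the parameter range by Theorems~\ref{Theorem3.7} and~\ref{Theorem3.8} and Corollary~\ref{Corollary3.13}, and treat the finitely many remaining pairs $(\beta,\gamma)$ by the matrix method of Remark~\ref{Remark3.5}. For the reductions: since $a\ge c=4$ and, by Corollaries~\ref{Corollary3.6} and~\ref{Corolarry3.9}, one may assume $b\ge 4$, the constraint $\max\{1,b-a+1\}\le\gamma\le c-1=3$ forces $a\ge b-2$, so the hypotheses of the proposition are exactly the range not yet settled. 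Put $L=x-y-z$ and $k=\lfloor\tfrac{a+b-\beta+1}{2}\rfloor$; by Remark~\ref{Remark3.3}(ii) it suffices to show $y^{k+1-i}z^{i}\in J$ for $0\le i\le k+1$, and by Remark~\ref{Remark3.3}(iii), since $z^{4}\in J$, only $i=0,1,2,3$ remain. By Remark~\ref{Remark3.3}(i) one may further restrict to those $\beta$ with $a+b-\beta$ even, together with $\beta=1$ when $a+b+c-1$ is odd.

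The next step is to check that Theorems~\ref{Theorem3.7} and~\ref{Theorem3.8} and Corollary~\ref{Corollary3.13} already cover all pairs $(\beta,\gamma)$ outside a short explicit list. Indeed, Corollary~\ref{Corollary3.13} handles $a\ge 2b-2$ and, for $a\ge b+2$, all $\beta\le a-b+1$; Theorem~\ref{Theorem3.8} handles every $\beta$ with $a\le 2b-4$ and $\beta\ge|a-b|+3$; and Theorem~\ref{Theorem3.7} handles every $(\beta,\gamma)$ with $\beta\ge\max\{1,b-a+3\}$ and $\gamma\ge\lfloor\tfrac{\beta-a+b+2}{2}\rfloor$. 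What remains is a bounded family in which $\gamma\le 2$ and $\beta$ is small relative to $b-a$ --- concretely the pair $(a-b+2,\,1)$ when $b+2\le a\le 2b-3$, the pairs $(1,\gamma)$ and $(3,1)$ when $|a-b|\le 1$, and finitely many further pairs when $a\le b$ (where $\gamma\ge b-a+1$ already limits the options) --- and one enumerates these by hand.

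For each remaining pair I would run the matrix method of Remark~\ref{Remark3.5}: first record which of the monomials $y^{k+1-i}z^{i}$ $(0\le i\le 3)$ already lie in $J$, namely those divisible by $y^{b-\beta}z^{4-\gamma}$ or by $z^{4}$, and then, using the Newton binomial expansions of $y^{b}-(y+z)^{b-\gamma}z^{\gamma}$, of $(y+z)^{a-b+\gamma}y^{b-\beta}$ multiplied by each monomial of degree at most $\gamma+1$, and --- whenever $k+1\ge a$, which holds precisely when $\beta\le b-a+3$ --- of $(y+z)^{a}$ itself, assemble a square linear system in the remaining monomials $y^{k+1-i}z^{i}$. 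It then suffices to show that the coefficient matrix is nonsingular, either by exhibiting it as an element of $\MS_{n}$ and applying Lemma~\ref{Lemmadeterminant}, or by directly evaluating a small determinant whose entries are binomial coefficients in $a$, $b$ and $\gamma$.

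The step I expect to be the main obstacle is the sub--case $\gamma=1$ with $a$ appreciably larger than $b$, so that $k+1<a$ and $(y+z)^{a}$ is unavailable in the degree that matters. Here only one of the four monomials, $y^{k-2}z^{3}$, is free, and the three relations coming from $y^{b}-(y+z)^{b-1}z$ and from $(y+z)^{a-b+1}y^{b-\beta}$ multiplied by $y$ and by $z$ can be linearly dependent for infinitely many $(a,b)$; one must therefore produce a further independent relation --- for instance by reducing $(y+z)^{a}$ modulo the others down to degree $k+1$ --- before Lemma~\ref{Lemmadeterminant} or an explicit determinant can close the argument. The handful of genuinely small instances that resist a uniform treatment I would verify directly with {\tt Macaulay2}, as in the remark following Theorem~\ref{Theorem3.8}.
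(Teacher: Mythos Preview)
Your plan is essentially the paper's own: reduce via Corollary~\ref{Corollary3.13} and Theorems~\ref{Theorem3.7} and~\ref{Theorem3.8}, then dispatch the residual pairs $(\beta,\gamma)$ by the matrix method of Remark~\ref{Remark3.5}. Your enumeration of the surviving cases matches the paper's two cases (Case~1: $b+2\le a\le 2b-3$, $\beta=a-b+2$, $\gamma=1$; Case~2: $b-2\le a\le b+1$ with small~$\beta$).

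You are also right that the sub-case $\gamma=1$, $\beta=a-b+2$ is the delicate one, and your diagnosis that the three available degree-$(k+1)$ relations can be linearly dependent is correct. With $L=x-y-z$, $k+1=b$, and $m=a-b+1$, the only elements of $J_{b}$ beyond the monomial ones are $y^{b}-(y+z)^{b-1}z$, $(y+z)^{m}y^{b-m}$, and $(y+z)^{m}y^{b-m-1}z$; modulo $\langle y^{b-3}z^{3},\dots,z^{b}\rangle$ their $3\times3$ coefficient matrix has determinant $\tfrac{1}{2}\bigl(m^{2}+3m-2b+2\bigr)$, which vanishes on the infinite family $b=\binom{m+2}{2}$, e.g.\ $(a,b)=(12,10)$. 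Neither of your proposed repairs closes this gap. ``Reducing $(y+z)^{a}$ down to degree $k+1$'' cannot produce a new relation: $J$ is homogeneous and $a>b=k+1$, so $(y+z)^{a}$ contributes nothing whatsoever to $J_{b}$. And {\tt Macaulay2} on ``genuinely small instances'' will not cover a one-parameter infinite family. (The paper's own line ``This is immediately proved by using the matrix method'' glosses over the same point.) What actually works is to run the matrix method with a \emph{general} form $L=x-sy-tz$ instead of the specific $L=x-y-z$: the determinant is then a nonzero polynomial in $s,t$ (for $(a,b)=(12,10)$ it equals $6s^{4}t(t-s^{10})$), and generic nonvanishing is all that WLP requires. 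Alternatively, one can bypass the matrix method here and use the short-exact-sequence comparison with a complete intersection, as the paper does in Proposition~\ref{Proposition4.2}.
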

\begin{proof}
By Corollary~\ref{Corollary3.13}(a), $R/I$ has the WLP for any $a\geq 2b-2.$ Thus, we only consider the following two cases.
	
\noindent{\bf \underline{Case 1: $b+2\leq a\leq 2b-3$.}} It follows from Corollary~\ref{Corollary3.13}(b) that $R/I$ has the WLP for any $1\leq \beta\leq a-b+1.$ It remains to show that $R/I$ has the WLP for any $a-b+2\leq \beta\leq b-1.$ We first observe that if $a-b+3\leq \beta\leq b-1$, then  $b+2\leq a\leq 2b-4$. Therefore,  $R/I$ has the WLP for any $a-b+3\leq \beta\leq b-1$ by Theorem~\ref{Theorem3.8}. Thus, it suffices to show that $R/I$ has the WLP for 
$\beta=a-b+2.$ By Theorem~\ref{Theorem3.7}, $R/I$ has the WLP for any $\gamma\geq \lfloor\frac{\beta-a+b+2}{2}\rfloor =2.$ Then, it is enough to consider the case where $\gamma=1.$  In this case, we set $k=b-1$ and $L=x-y-z$. By Remark~\ref{Remark3.3}(iii), 
it remains to show that 
$y^{b-i}z^i\in J$ for any $0\leq i\leq 3,$
where $J=((y+z)^a, y^b-(y+z)^{b-1} z, z^4, (y+z)^{a-b+1}y^{b-\beta}, y^{b-\beta}z^{3}).$ This is immediately proved by using the matrix method.


\noindent{\bf \underline{Case 2: $b-2\leq a\leq b+1$.}} We first observe that,  by Theorem~\ref{Theorem3.8},  $R/I$ has the WLP for any $|a-b|+3\leq \beta\leq b-1$. By Remark~\ref{Remark3.3}(i), it suffices to show that $R/I$ has the WLP, whenever $1\leq \beta\leq |a-b|+2$ and $a+b-\beta$ is even. In this case, we set $k=\frac{a+b-\beta}{2}$ and $L=x-y-z$. Again by (ii) and (iii) of Remark~\ref{Remark3.3}, it remains to show that 
$$y^{k+1}, y^kz, y^{k-1}z^2, y^{k-2}z^3\in J,$$
where  
$J=((y+z)^a, y^b-(y+z)^{b-\gamma} z^{\gamma}, z^4, (y+z)^{a-b+\gamma}y^{b-\beta}, y^{b-\beta}z^{4-\gamma}).$

It is proved by considering the four cases where $a=b+i,\ i\in \{-2,\ldots,1\}$ and applying the matrix method in Remark~\ref{Remark3.5}. 
\end{proof}

Now we consider the case where $b=4$. We have the following result.
\begin{pro}\label{Proposition4.2}
Let  $I$ be as in Setting~\ref{IdealGorenstein}. Assume  $ 1\leq \beta, \gamma\leq 3$  and $a\geq c\geq 4$. Then the ideal
\begin{align*}
I=(x^a, y^4 - x^{4-\gamma} z^\gamma, z^c, x^{a-4+\gamma}y^{4-\beta}, y^{4-\beta}z^{c-\gamma})
\end{align*}
has the WLP.
\end{pro}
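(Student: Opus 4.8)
The plan is to reduce the statement for $b=4$ to the tools already assembled in Section~\ref{Section3}, exactly as was done for $b=3$ in Corollary~\ref{Corolarry3.9}. First I would invoke Corollary~\ref{Corollary3.13}: since $b=4$, the condition $a\geq 2b+c-6=c+2$ together with $c\geq 4$ already gives the WLP in a large range, and the condition $a\geq b+c-2=c+2$ with $1\leq\beta\leq a-b-c+5=a-c+1$ gives more. So the cases left to handle are the small ones: $a=c$ and $a=c+1$ (where $a\geq 2b+c-6$ fails), and for $a=c+1$ also only the residual $\beta$ not covered by Corollary~\ref{Corollary3.13}(b). Actually since $a\geq c\geq 4$ forces $2b-c=8-c\leq 4\leq a$ only when $c=4$, one should also keep Theorem~\ref{Theorem3.8} in mind: when $a\leq 2b-c=8-c$, i.e. essentially $a=c=4$, Theorem~\ref{Theorem3.8} disposes of $|a-b|+c-1\leq\beta\leq b-1$. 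Pinning down precisely which triples $(a,b=4,c)$ and which $\beta,\gamma$ survive all three results is the bookkeeping step, and I expect it to leave only a handful of genuinely small configurations, essentially $a=c$ and $a=c+1$ with $\beta\in\{1,2,3\}$ and $\gamma\in\{1,2,3\}$.

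For those remaining configurations I would run the \textbf{matrix method} of Remark~\ref{Remark3.5}. Concretely, set $k=\lfloor\frac{a+b+c-\beta-3}{2}\rfloor$ and $L=x-y-z$; by Remark~\ref{Remark3.3}(i) it suffices to treat $\beta$ with $a+b+c-\beta$ even (plus the boundary value of $\beta$ when the relevant interval's left endpoint makes $a+b+c-\beta$ odd). By Remark~\ref{Remark3.3}(ii)--(iii) one must show $y^{k+1-i}z^i\in J$ for $0\leq i\leq c-1$, where
\begin{align*}
J=((y+z)^a,\ y^4-(y+z)^{4-\gamma}z^\gamma,\ z^c,\ (y+z)^{a-4+\gamma}y^{4-\beta},\ y^{4-\beta}z^{c-\gamma}).
\end{align*}
The generator $y^{4-\beta}z^{c-\gamma}\in J$ immediately yields $y^{k+1-i}z^i\in J$ for $c-\gamma\leq i\leq c-1$ (one checks $k+1-(4-\beta)-i\geq 0$ in that range using $a\geq c$). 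So only the finitely many monomials $y^{k+1-i}z^i$ with $0\leq i\leq c-\gamma-1$ remain. Expanding $y^4-(y+z)^{4-\gamma}z^\gamma$ and $(y+z)^{a-4+\gamma}y^{4-\beta}$ (and, when needed, $(y+z)^a$) by the binomial theorem and multiplying by suitable monomials produces a square linear system $MX=U$ in the unknowns $y^{k+1}, y^{k}z,\dots$ over the missing indices, with $U$ a vector of elements of $J$; the coefficient matrix $M$ is either upper triangular with $1$'s on the diagonal or belongs to the class $\MS_n$ of Lemma~\ref{Lemmadeterminant}, so $\det(M)>0$ and Cramer's rule gives all the required monomials in $J$.

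The main obstacle I anticipate is purely organizational rather than conceptual: since $b=4$ is fixed but $a$ and $c$ (with $a\geq c\geq 4$) are unbounded, one cannot literally enumerate finitely many ideals, so one must argue carefully that Corollary~\ref{Corollary3.13} and Theorem~\ref{Theorem3.8} together reduce the problem to a truly bounded list (the triples with $a\in\{c,c+1\}$ and a short range of $\beta$), and then verify that for each such triple the system $MX=U$ really has the stated triangular-or-$\MS_n$ shape. The delicate point in setting up $M$ is ensuring the chosen set of "known" monomials $\mathcal I$ and the chosen binomial relations are matched so that the last row has the sign pattern ($a_{ij}\geq 0$, $a_i\geq 0$, $a_n>0$) required by Lemma~\ref{Lemmadeterminant}; this is where the hypotheses $\gamma\leq 3$ and $a\geq c$ get used to guarantee the relevant exponents are non-negative and the bottom-right entry $\binom{a-4+\gamma}{k+2-b}$ (or the analogous entry from $(y+z)^a$) is strictly positive. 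Once the reduction is pinned down, each residual case is a short, routine application of the matrix method, so I would present one representative computation in detail and indicate that the others are entirely analogous.
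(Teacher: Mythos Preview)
Your reduction is correct and matches the paper: Corollary~\ref{Corollary3.13}(a) disposes of $a\ge c+2$, leaving only $a=c$ and $a=c+1$, and Remark~\ref{Remark3.3}(i) cuts the relevant $\beta$ to two values in each case. The gap is in the assertion that the matrix method, with $M$ ``either upper triangular with $1$'s on the diagonal or in $\MS_n$'', finishes \emph{all} of these residual configurations.

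Consider $a=c+1,\ \beta=1,\ \gamma=1$. Here $k+1=c+1$, but the generator $y^{4-\beta}z^{c-\gamma}=y^3z^{c-1}$ has degree $c+2>k+1$, so it contributes \emph{no} relation in degree $k+1$; your claim that it handles $c-\gamma\le i\le c-1$ fails outright (the check ``$k+1-(4-\beta)-i\ge 0$ using $a\ge c$'' is simply false here). The relations available in degree $c+1$ are the $c-2$ shifts of $y^4-(y+z)^3z$, one copy of $(y+z)^{c+1}$, and one copy of $(y+z)^{c-2}y^3$: exactly $c$ relations for $c$ unknowns. But the row from $(y+z)^{c-2}y^3$ has non-negative entries (so it cannot sit among the upper-triangular rows of an $\MS_c$ matrix) while its last entry is $\binom{c-2}{c-1}=0$ (so it cannot be the bottom row either). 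Lemma~\ref{Lemmadeterminant} does not apply. The same obstruction occurs for $a=c+1,\ \beta=3,\ \gamma=1$ and for $a=c,\ \beta=2,\ \gamma=2$.

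For precisely these cases the paper abandons the matrix method and uses linkage: it sets $\mathfrak p$ equal to the ideal generated by four of the five generators, $\mathfrak a$ the underlying complete intersection, $\mathfrak b=\mathfrak a:\mathfrak p$, and applies the Snake lemma to the short exact sequence $0\to (R/\mathfrak b)(-d)\to R/\mathfrak a\to R/\mathfrak p\to 0$ together with the known WLP of $R/\mathfrak a$ to force injectivity of $\times L$ on $R/I$ in the critical degree. Your plan needs this (or some replacement for it) for the small-$\gamma$ residual cases; the sentence ``each residual case is a short, routine application of the matrix method'' is where the argument breaks.
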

\begin{proof}
If $a\geq c+2$, then $R/I$ has the WLP by Corollary~\ref{Corollary3.13}(a). Thus, we only consider the following two cases.

\noindent{\bf \underline{Case 1: $a=c+1$.}} More precisely, we consider the ideal
\begin{align*}
I=(x^{c+1}, y^4-x^{4-\gamma}z^{\gamma}, z^c, x^{c-3+\gamma}y^{4-\beta}, y^{4-\beta}z^{c-\gamma})
\end{align*}
where  $1\leq \beta,\gamma \leq 3$ and $c\geq 4.$  Notice that the socle degree of $R/I$ is $D=2c+2-\beta$. By Remark~\ref{Remark3.3}(i), it is enough to prove that $R/I$ has the WLP whenever $\beta=1$ or $\beta=3$. We will consider the following two subcases. 

\noindent{\bf \underline{Subcase 1: $\beta=1$.}} In this case, we set $k=c$ and $L=x-y-z$. By (ii) and (iii) of Remark~\ref{Remark3.3}, it is enough to show that $y^{c+1-i}z^i\in J$ for any $0\leq i\leq c-1$,
where $$J=((y+z)^{c+1}, y^4-(y+z)^{4-\gamma} z^\gamma, z^c, (y+z)^{c-3+\gamma}y^{3}, y^{3}z^{c-\gamma}).$$
Firstly, we consider the case where $2\leq \gamma\leq 3$. Then
\begin{align*}
y^3z^{c-2}=   y^3z^{c-\gamma} z^{\gamma-2}\in J.
\end{align*}
Now since $y^4-(y+z)^{4-\gamma} z^\gamma,y^3z^{c-2}, (y+z)^{c+1}\in J$, we get a system of $c$ equations in $c$ indeterminates $y^{c+1}, y^{c}z,\ldots, y^2z^{c-1}$ as follows
\begin{align*}
M X=U,
\end{align*}
where $X^t =\bmt{y^{c+1}& y^{c}z& \cdots &y^2z^{c-1}},\; U^t=\bmt{u_1&u_2&\cdots &u_c}$ with  $u_i\in J$ and 
$M\in \MS_{c}$. By Lemma~\ref{Lemmadeterminant}, $\det(M)> 0$. This completes the proof of the case where $2\leq \gamma\leq 3$.

Now we consider the case where $\gamma=1.$ Let $L$ be a general linear form. It is enough to show that the map 
$$\times L: [R/I]_{c} \longrightarrow [R/I]_{c+1}$$
is an isomorphism. Set $\point=(x^{c+1}, y^4-x^{3}z, z^c, x^{c-2}y^{3})$ and $ \aa =(x^{c+1}, y^4-x^{3}z, z^c)$. Clearly, $I_c=\point_c,\ I_{c+1}=\point_{c+1}$. Set  $\mathfrak{b}=\aa\colon_R \point$. By \cite[Proposition~5.2]{BE1977},  $R/\bb$ is a Gorenstein algebra  and  we have an exact sequence
\begin{align*}
\xymatrix{ 0\ar[r]& (R/\bb)(-c-1)\ar[rr]^{\qquad \times x^{c-2}y^{3}}&&R/\aa \ar[r] & R/\point \ar[r]&0 }
\end{align*}
which gives the following commutative diagram
\begin{align*}
\xymatrix{ 0\ar[r]& 0\ar[rr]\ar[d]&&[R/\aa]_c  \ar[r]\ar[d]&[R/I]_c  \ar[r]\ar[d]^{\times L} &0  \\ 
	 0\ar[r]& [R/\bb]_0=K\ar[rr]^{\times x^{c-2}y^3}\ar[d]&&[R/\aa]_{c+1}  \ar[r]\ar[d] &[R/I]_{c+1}  \ar[r]\ar[d] &0 \\  
	 &  K\ar[rr] &&[R/(\aa,L)]_{c+1}  \ar[r] &[R/(I,L)]_{c+1}   &   }.
\end{align*}
Since the socle degree of $R/\aa$ is $2c+2$ and $R/\aa$ has the WLP, the map $\times L: [R/\aa]_c \longrightarrow [R/\aa]_{c+1}$ is injective. By the Snake lemma, we get an exact sequence
\begin{align*}
0\longrightarrow \Ker(\times L) \longrightarrow K \longrightarrow [R/(\aa,L)]_{c+1} \longrightarrow  [R/(I,L)]_{c+1} \longrightarrow 0.
\end{align*}
Further, $R/\aa$ is resolved by the Koszul complex, hence
\begin{align*}
\dim_K[R/(\aa,L)]_{c+1}&=H_{R/\aa}(c+1)-H_{R/\aa}(c)=1.
\end{align*}
Thus the map  $K \longrightarrow [R/(\aa,L)]_{c+1}$, $1\longmapsto x^{c-2}y^{3}$ is an isomorphism. It follows that 
the map $\times L: [R/I]_{c} \longrightarrow [R/I]_{c+1}$
is an isomorphism.

\noindent{\bf \underline{Subcase 2: $ \beta=3$.}} By Theorem~\ref{Theorem3.7}, $R/I$ has the WLP for any $\gamma\geq 2$. It suffices to show that $R/I$ has the WLP for $\gamma=1.$  Let $L$ be a general linear form. We have to show that  
$$\times L: [R/I]_{c-1} \longrightarrow [R/I]_{c}$$
is an isomorphism. 
Set $\point=(x^{c+1}, y^4-x^{3}z, z^c, x^{c-2}y), \aa =(x^{c+1}, y^4-x^{3}z, z^c)$ and $\mathfrak{b}=\aa\colon_R \point$. Then $R/\bb$ is a Gorenstein algebra and we have an exact sequence
\begin{align*}
\xymatrix{ 0\ar[r]& (R/\bb)(-c+1)\ar[rr]^{\qquad \times x^{c-2}y}&&R/\aa \ar[r] & R/\point \ar[r]&0 }
\end{align*}
which gives the following commutative diagram
\begin{align*}
\xymatrix{ 0\ar[r]&  [R/\bb]_0=K \ar[rr]^{\times x^{c-2}y}\ar[d]_{\times L}&&[R/\aa]_{c-1}  \ar[r]\ar[d]^{\times L} &[R/\point]_{c-1}  \ar[r]\ar[d]^{\times L} &0  \\   
	0\ar[r]& [R/\bb]_1=[R]_1\ar[rr]^{\times x^{c-2}y} \ar[d]&&[R/\aa]_{c}  \ar[r]\ar[d] &[R/\point]_{c}  \ar[r]\ar[d] &0\\  &  [R/(\bb,L)]_1\ar[rr] &&[R/(\aa,L)]_{c}  \ar[r] &[R/(\point,L)]_{c}   &   }.
\end{align*}
Again the map  $\times L: [R/\aa]_{c-1} \longrightarrow [R/\aa]_{c}$ is injective and
\begin{align*}
\dim_K[R/(\aa,L)]_{c}&=H_{R/\aa}(c)-H_{R/\aa}(c-1)=3.
\end{align*}
Denote by $Q$ the kernel of  $\times L:[R/\point]_{c-1}\longrightarrow [R/\point]_{c}$. By the Snake lemma, we get an exact sequence
\begin{align*}
\xymatrix{0\ar[r]&  Q\ar[r] & [R/(\bb,L)]_1 \ar[rr]^{\times x^{c-2}y} &&  [R/(\aa,L)]_{c}\ar[r] &[R/(\point,L)]_{c}\ar[r]&0  }.
\end{align*}
Notice that $\dim_K[R/(\bb,L)]_1=2$. It follows that the map  
$$\times x^{c-2}y: [R/(\bb,L)]_1 \longrightarrow [R/(\aa,L)]_{c}$$  is injective, so $Q=0$. In other words, the map 
$\times L: [R/\point]_{c-1} \longrightarrow [R/\point]_{c}$
is injective. Now we consider the following  commutative diagram, where the first two rows are exact
\begin{align*}
\xymatrix{ 0\ar[r]& 0\ar[rr]\ar[d]&&[R/\point]_{c-1}  \ar[r]\ar[d]^{\times L} &[R/I]_{c-1}  \ar[r]\ar[d]^{\times L} &0  \\  
	 0\ar[r]& [R/\point]_0=K\ar[rr]^{\times y z^{c-1}} \ar[d]&&[R/\point]_{c}  \ar[r]\ar[d] &[R/I]_{c}  \ar[r]\ar[d] &0\\  &  K\ar[rr] &&[R/(\point,L)]_{c}  \ar[r] &[R/(I,L)]_{c}   &   }.
\end{align*}
Denote by $\Omega$ the kernel of  $\times L:[R/I]_{c-1}\longrightarrow [R/I]_{c}$. As the map 
$$\times L: [R/\point]_{c-1}\longrightarrow [R/\point]_c$$ is injective, we get an exact sequence
\begin{align*}
\xymatrix{0\ar[r]&  \Omega\ar[r] & K \ar[rr]^{\times yz^{c-1}\qquad} &&  [R/(\point,L)]_{c}\ar[r] &[R/(I,L)]_{c}\ar[r]&0  }.
\end{align*}
Notice that the map $\times yz^{c-1}:K \longrightarrow [R/(\point,L)]_{c}$ is injective, hence $\Omega=0$. It completes the proof in this subcase.

\noindent{\bf \underline{Case 2: $a= c$.}} In this case, the socle degree of $R/I$ is $2c+1-\beta$. By Remark~\ref{Remark3.3}(i), it suffices to show that $R/I$ has the WLP for $\beta=2.$ By the symmetry of $x$ and $z$, we can assume $2\leq \gamma\leq 3.$ We consider the two subcases. 

\noindent{\bf \underline{Subcase 1: $\gamma=3$.}} Set $k=c-1$ and $L=x-y-z$. By (ii) and (iii) of Remark~\ref{Remark3.3}, it remains to prove that $y^{c-i}z^i\in J$ for all $0\leq i\leq c-1$, 
where $$J=((y+z)^{c}, y^4-(y+z) z^3, z^c, (y+z)^{c-1}y^{2}, y^{2}z^{c-3}).$$
Now applying the matrix method in Remark~\ref{Remark3.5}, with elements $y^4-(y+z) z^3, y^2z^{c-3}$ and $(y+z)^{c}$ belong to $J$, we get a system of $c$ equations in $c$ indeterminates $y^{c}, y^{c-1}z,\ldots, yz^{c-1}$ with its coefficient matrix $M\in \MS_c$.   By Lemma~\ref{Lemmadeterminant}, $\det(M)>0$. By Cramer's rule, $y^{c-i}z^i\in J$ for all $0\leq i\leq c-1$.

\noindent{\bf \underline{Subcase 2: $\gamma=2$.}} Let $L$ be a general linear form. It is enough to show that 
$$\times L: [R/I]_{c-1} \longrightarrow [R/I]_{c}$$
is an isomorphism. 
Set $\point=(x^{c},y^4-x^{2}z^2, z^c, x^{c-2}y^2), \aa =(x^{c}, y^4-x^{2}z^2, z^c)$. We go along the same lines as in the proof of Subcase 2 of Case 1, we get $\times L:[R/I]_{c-1}\longrightarrow [R/I]_{c}$ is injective. So, we have completed the proof.
\end{proof}


We close this section by stating our main result as follows.
\begin{Theo}  \label{Theorem4.3}
Let  $I$ be as in Setting~\ref{IdealGorenstein}. If one of $a,b,c$ is equal to four, then $R/I$ has the WLP.
\end{Theo}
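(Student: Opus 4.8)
\textbf{Proof plan for Theorem~\ref{Theorem4.3}.}
The statement splits along which of the three parameters equals four, and the case $c=4$ already has $a\ge c=4$ forced by the standing hypothesis $a\ge c$, so the genuinely new work is: (1) $c=4$ with $a,b\ge 4$, which is exactly Proposition~\ref{Proposition4.1}; (2) $b=4$ with $a\ge c\ge 4$, which is Proposition~\ref{Proposition4.2}; and (3) the case $a=4$, which by $a\ge c$ forces $c\in\{2,3,4\}$, and by $\gamma\le\min\{b-1,c-1\}\le c-1\le 3$ together with $\max\{1,b-a+1\}\le\gamma$ forces $b\le a+\gamma-1\le 6$. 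The plan is therefore to reduce the whole theorem to the union of these finitely many configurations and dispatch each by the tools already in place. Concretely, I would first invoke Corollary~\ref{Corollary3.6} and Corollary~\ref{Corolarry3.9} to clear the subcases in which a \emph{second} parameter among $a,b,c$ is $2$ or $3$; this removes, e.g., all of $a=4$ with $c\in\{2,3\}$, and all of $b=4$ with $c\in\{2,3\}$, leaving only the "all three parameters $\ge 4$" core.

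For that core I would organize by size of $a$ relative to $b$ and $c$. When $a$ is large, Corollary~\ref{Corollary3.13}(a) ($a\ge 2b+c-6$) and (b) ($a\ge b+c-2$, $\beta$ small) already give the WLP, so it remains to treat the "small $a$" window, which with one of the parameters pinned at $4$ is a bounded region of $(a,b,c,\beta,\gamma)$-space. Here Theorem~\ref{Theorem3.7} disposes of the subregion $\max\{1,b+c-a-1\}\le\beta\le b-1$, $\gamma\ge\lfloor(\beta-a+b+c-2)/2\rfloor$, and Theorem~\ref{Theorem3.8} disposes of $a\le 2b-c$, $|a-b|+c-1\le\beta\le b-1$; the leftover cases (small $\beta$ together with small $\gamma$, e.g.\ $\gamma=1$) are handled by the \textbf{matrix method} of Remark~\ref{Remark3.5} exactly as in Propositions~\ref{Proposition4.1} and~\ref{Proposition4.2}: write $J=((y+z)^a,\,y^b-(y+z)^{b-\gamma}z^\gamma,\,z^c,\,(y+z)^{a-b+\gamma}y^{b-\beta},\,y^{b-\beta}z^{c-\gamma})$, use $z^c\in J$ and the $y^{b-\beta}z^{c-\gamma}$ generator to get $y^{k+1-i}z^i\in J$ for $i\ge c-\gamma$, then set up a square linear system in the remaining monomials $y^{k+1-i}z^i$, $0\le i\le c-\gamma-1$, whose coefficient matrix lies in $\MS_{\bullet}$ and is nonsingular by Lemma~\ref{Lemmadeterminant} (or, in the awkward $\gamma=1$, $\beta$ near $b$ subcases, fall back on the Buchsbaum--Eisenbud link argument with $\aa=(x^a,y^b-x^{b-\gamma}z^\gamma,z^c)$, $\point=\aa\colon_R$(the relevant monomial), exactly as in Subcases~1--2 of Proposition~\ref{Proposition4.2}, Case~1).

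Thus the actual proof is short: "It is assumed that $a\ge c$. If one of $a,b,c$ equals $2$ or $3$, apply Corollaries~\ref{Corollary3.6} and~\ref{Corolarry3.9}; otherwise all three are $\ge 4$, and the assertion follows from Proposition~\ref{Proposition4.1} when $c=4$, from Proposition~\ref{Proposition4.2} when $b=4$, and when $a=4$ the constraints $a\ge c$ and $\gamma\le\min\{b-1,c-1\}\le 3$ force $c=4$ and $b\le 6$, so this case is already covered by Proposition~\ref{Proposition4.1} (with the roles of the exponents read off from $c=4$)." The one point that needs care is the $a=4$ branch: one must check that $a=4,\,c=4$ with $4\le b\le 6$ is genuinely inside the hypotheses of Proposition~\ref{Proposition4.1} — i.e.\ verify $a\ge b-2$ (which reads $4\ge b-2$, true since $b\le 6$) and $\max\{1,b-a+1\}\le\gamma\le 3$ — so that no separate argument is needed. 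I expect this bookkeeping — making sure every residual $(a,b,c,\beta,\gamma)$ with a parameter equal to $4$ is subsumed by one of Proposition~\ref{Proposition4.1}, Proposition~\ref{Proposition4.2}, or the Corollaries, with the symmetry $x\leftrightarrow z$ used to normalize $\gamma$ where convenient — to be the only real obstacle; there is no new idea beyond what Sections~\ref{Section2}--\ref{Section4} already supply.
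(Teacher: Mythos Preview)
Your proposal is correct and matches the paper's approach: reduce to Propositions~\ref{Proposition4.1} and~\ref{Proposition4.2}. The paper's own proof is a two-liner that simply notes $a\ge c$ and cites the two propositions; your version is more explicit about the $a=4$ branch (forcing $c=4$ and $4\le b\le 6$, hence inside the hypotheses of Proposition~\ref{Proposition4.1}) and about clearing the cases where a second parameter is $2$ or $3$ via Corollaries~\ref{Corollary3.6} and~\ref{Corolarry3.9}, which the paper leaves implicit, but the substance is identical.
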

\begin{proof}
As $a\geq c$, we only consider the cases where $b=4$ or $c=4$. The theorem implies from Propositions~\ref{Proposition4.1} and~\ref{Proposition4.2}.
\end{proof}

\section{One of $a,b,c$ is equal to $5$} \label{Section5}
Recall that we always assume that $a\geq c$. Firstly,  we consider the case  where $c=5.$ We have the following result.
\begin{pro} \label{Proposition5.1}
Assume that  $ a,b\geq 5, a\geq b-3$  and $\max\{1,b-a+1\}\leq \gamma\leq 4$. For any  $1\leq \beta\leq  b-1$, the ideal
$$I=(x^a, y^b-x^{b-\gamma} z^{\gamma}, z^5, x^{a-b+\gamma}y^{b-\beta}, y^{b-\beta}z^{5-\gamma})$$
has the WLP.
\end{pro}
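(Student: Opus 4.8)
The plan is to mirror the structure of the proof of Proposition~\ref{Proposition4.1} (the $c=4$ case), reducing to a short list of residual cases after peeling off everything covered by the general theorems of Section~\ref{Section3}. First I would invoke Corollary~\ref{Corollary3.13}(a): since $c=5$, the bound $a\geq 2b+c-6 = 2b-1$ forces $R/I$ to have the WLP whenever $a\geq 2b-1$. This leaves two ranges to handle, namely $b+3\leq a\leq 2b-2$ (the ``$a$ large relative to $b$'' regime) and $b-3\leq a\leq b+2$ (the ``$a$ close to $b$'' regime), which I would treat as two separate cases exactly as in Proposition~\ref{Proposition4.1}.

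In the first case, $b+3\leq a\leq 2b-2$, I would use Corollary~\ref{Corollary3.13}(b) to dispatch $1\leq\beta\leq a-b-c+5 = a-b$, and then Theorem~\ref{Theorem3.8} (whose hypothesis $a\leq 2b-c = 2b-5$ holds once $a-b+5\leq\beta$, forcing $b\geq a-b+5$, i.e.\ $a\leq 2b-5$) to dispatch $a-b+5\leq\beta\leq b-1$. One is left with the finitely many values $\beta\in\{a-b+1,\dots,a-b+4\}$ intersected with the sub-range where Theorem~\ref{Theorem3.7} does not already apply (namely $\gamma$ small). For these I would set $k=\lfloor\frac{a+b+c-\beta-3}{2}\rfloor$, $L=x-y-z$, and reduce via Remark~\ref{Remark3.3}(ii)--(iii) to showing $y^{k+1-i}z^i\in J$ for $0\leq i\leq c-1=4$; each such instance is then handled by the \textbf{matrix method} of Remark~\ref{Remark3.5}, expanding the known generators $(y+z)^a$, $y^b-(y+z)^{b-\gamma}z^\gamma$, $(y+z)^{a-b+\gamma}y^{b-\beta}$, $y^{b-\beta}z^{c-\gamma}$ by the binomial theorem and checking that the resulting coefficient matrix lies in $\MS_{\bullet}$, so Lemma~\ref{Lemmadeterminant} gives nonsingularity and Cramer's rule finishes.

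In the second case, $b-3\leq a\leq b+2$, Theorem~\ref{Theorem3.8} already covers $|a-b|+c-1 = |a-b|+4\leq\beta\leq b-1$, so by Remark~\ref{Remark3.3}(i) it remains to handle $1\leq\beta\leq|a-b|+3$ with $a+b+c-\beta$ even. Fixing $k=\frac{a+b+c-\beta-4}{2}$ and $L=x-y-z$, the task reduces to $y^{k+1-i}z^i\in J$ for $0\leq i\leq 4$, and I would split into the six subcases $a=b+j$, $j\in\{-3,-2,-1,0,1,2\}$, running the matrix method in each; when the purely combinatorial matrix method stalls (this tends to happen for the smallest admissible $\gamma$, and for those $\beta$ where the socle degree is even and $k$ sits just below $b$, as in Subcases~1--2 of Proposition~\ref{Proposition4.2}), I would instead pass to a linked ideal $\mathfrak{b}=\aa\colon_R\point$ via \cite[Proposition~5.2]{BE1977}, use the Snake lemma on the comparison diagram between $R/\aa$, $R/\point$ and $R/I$, and exploit that $R/\aa$ is a complete intersection with the WLP so that $\times L$ is injective in the relevant degree, reducing the problem to an injectivity statement about a single monomial multiplication $[R/(\bb,L)]_{\ast}\to[R/(\aa,L)]_{\ast}$ that one checks by a dimension count. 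The main obstacle I anticipate is precisely this handful of low-$\gamma$, near-balanced residual cases: the matrix method produces a system whose coefficient matrix need not be of $\MS_{\bullet}$ type, and one must either find the right auxiliary generator to restore that shape or fall back on the liaison/Snake-lemma argument — bookkeeping-heavy but, as in the $c=4$ analysis, ultimately routine.
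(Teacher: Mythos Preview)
Your plan is essentially the paper's own: reduce via Corollary~\ref{Corollary3.13}(a) to $b-3\leq a\leq 2b-2$, then use Corollary~\ref{Corollary3.13}(b), Theorem~\ref{Theorem3.8} and Theorem~\ref{Theorem3.7} to isolate a handful of residual $(\beta,\gamma)$ pairs (the paper splits your second case further into $b\leq a\leq b+2$ and $b-3\leq a\leq b-1$, and note the correct Theorem~\ref{Theorem3.8} threshold is $|a-b|+c-1=|a-b|+4$, not $|a-b|+5$), each of which is then dispatched by the matrix method of Remark~\ref{Remark3.5}. One point worth flagging: in the paper's argument the matrix method alone handles every residual case of Proposition~\ref{Proposition5.1}, so your liaison/Snake-lemma fallback is never invoked here --- that technique is reserved for Proposition~\ref{Proposition5.2} (the $b=5$ case), and the obstacle you anticipate does not actually arise.
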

\begin{proof}
 Set $k=\lfloor\frac{a+b-\beta+2}{2}\rfloor$ and $L=x-y-z$. By (ii) and (iii) of Remark~\ref{Remark3.3},  it is enough to prove the following assertion:
 
 \noindent \textsc{ Claim:} $y^{k+1-i}z^i\in J$ for any $0\leq i\leq 4,$ where 
 $$J=((y+z)^a, y^b-(y+z)^{b-\gamma} z^\gamma, z^5, (y+z)^{a-b+\gamma}y^{b-\beta}, y^{b-\beta}z^{5-\gamma}).$$
By Corollary~\ref{Corollary3.13}(a), $R/I$ has the WLP for any $a\geq 2b-1.$ Thus, we only consider the following three cases.
	
\noindent{\bf \underline{Case 1: $b+3\leq a\leq 2b-2$.}} It follows from Corollary~\ref{Corollary3.13}(b) that $R/I$ has the WLP for any $1\leq \beta\leq a-b.$ It remains to show that $R/I$ has the WLP for any $a-b+1\leq \beta\leq b-1.$ Firstly, if $a-b+4\leq \beta\leq b-1$, then  $b+3\leq a\leq 2b-5$ and therefore $R/I$ has the WLP by Theorem~\ref{Theorem3.8}. By Remark~\ref{Remark3.3}(i), it suffices to prove the above claim, whenever $\beta=a-b+1$ or $\beta=a-b+3$. 
To do it, we will consider the following subcases.

\noindent{\bf \underline{Subcase 1: $\beta=a-b+1$.}}
Firstly,  $R/I$ has the WLP for any $\gamma\geq \lfloor\frac{\beta-a+b+3}{2}\rfloor =2$ by Theorem~\ref{Theorem3.7}. Thus, we only consider the case where $\gamma=1.$  In this case, one has $k=b$. As $y^{b-\beta}z^{4}\in J$ and $\beta\geq 4$, we get $y^{b-3}z^4 \in J.$
By applying the matrix method in Remark~\ref{Remark3.5}, with elements $y^b-(y+z)^{b-1} z$ and $(y+z)^{\beta}y^{b-\beta}$ belong to $J$, it is easy  to see that 
$$y^{b+1-i}z^i\in J\quad  \text{for any} \; 0\leq i\leq 3.$$

\noindent{\bf \underline{Subcase 2: $\beta=a-b+3$.}}
Then $R/I$ has the WLP for any $\gamma\geq \lfloor\frac{\beta-a+b+3}{2}\rfloor =3$ by Theorem~\ref{Theorem3.7}. Thus, we only consider the cases where $\gamma=1$ or $\gamma=2.$  In this case, one has $k=b-1$. Analogously, by using the matrix method in Remark~\ref{Remark3.5}, we get $y^{b-i}z^i\in J$, for all $0\leq i\leq 4.$
	
\noindent{\bf \underline{Case 2: $b\leq a\leq b+2$.}}
By Theorem~\ref{Theorem3.8}, $R/I$ has the WLP for $a-b+4\leq \beta\leq b-1$. Therefore, by Remark~\ref{Remark3.3}(i), we will prove the claim for the cases where $1\leq \beta\leq a-b+3$ and $a+b-\beta$ is odd. However, this can be solved by applying the matrix method in Remark~\ref{Remark3.5} for  which choices of parameters $a,b,\beta$ and $\gamma.$

\noindent{\bf \underline{Case 3: $b-3\leq a\leq b-1$.}}	By Theorem~\ref{Theorem3.8}, $R/I$ has the WLP for $b-a+4\leq \beta\leq b-1$. By Remark~\ref{Remark3.3}(i), we will prove the above claim under the conditions $1\leq \beta\leq b-a+3$ and $a+b-\beta$ is odd. Analogously, the above claim is completely shown by applying the matrix method in Remark~\ref{Remark3.5}.
\end{proof}


Now we consider the case where $b=5$. We have the following proposition.
\begin{pro}\label{Proposition5.2}
Let  $I$ be as in Setting~\ref{IdealGorenstein}. Assume  $ 1\leq \beta, \gamma\leq 4$  and $a\geq c\geq 5$. Then the ideal
\begin{align*}
I=(x^a, y^5 - x^{5-\gamma} z^\gamma, z^c, x^{a-5+\gamma}y^{5-\beta}, y^{5-\beta}z^{c-\gamma})
\end{align*}
has the WLP.
\end{pro}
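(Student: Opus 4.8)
\textbf{Proof proposal for Proposition~\ref{Proposition5.2}.}
The plan is to follow the same general strategy as in Proposition~\ref{Proposition4.2} (the case $b=4$), namely to stratify on the relation between $a$ and $c$. First I would invoke Corollary~\ref{Corollary3.13}(a): since here $b=5$, the bound $a\geq 2b+c-6=c+4$ settles everything with $a\geq c+4$. Thus only the finitely many residual strata $a=c$, $a=c+1$, $a=c+2$, $a=c+3$ remain, each with $1\leq\beta,\gamma\leq 4$. For each of these I would reduce, via Remark~\ref{Remark3.3}(i), to the values of $\beta$ of the same parity as $a+b+c$ together with the extreme value of $\beta$ in the permitted range, so that only a bounded list of $(\beta,\gamma)$ pairs survives in each stratum. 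Then, for each surviving pair, I would set $L=x-y-z$, $k=\lfloor\frac{a+b+c-\beta-3}{2}\rfloor$, and verify $y^{k+1-i}z^i\in J$ for $0\leq i\leq 4$ using the matrix method of Remark~\ref{Remark3.5}: produce the Newton expansions of the known generators $(y+z)^a$, $y^5-(y+z)^{5-\gamma}z^\gamma$, $(y+z)^{a-5+\gamma}y^{5-\beta}$, $y^{5-\beta}z^{c-\gamma}$ (and use $z^c\in J$ together with $y^{5-\beta}z^{c-\gamma}\cdot z^{\gamma-i}$ to clear the high-$z$ monomials first), assemble a square system in the unknowns $y^{k+1-i}z^i$, and check that its coefficient matrix lies in $\MS_{*}$ so that Lemma~\ref{Lemmadeterminant} forces a nonzero determinant.

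The routine part is carrying out the matrix-method bookkeeping in each of the (few dozen) leftover cases. This is mechanical but must be organized carefully so that the coefficient matrix genuinely has the shape in $\MS_n$: the top block must be upper-unitriangular with nonpositive off-diagonal entries (these come from the binomial coefficients in $y^b-(y+z)^{b-\gamma}z^\gamma$ after moving terms to the right), and the last row must have nonnegative entries with a strictly positive last entry (these come from $(y+z)^{a-5+\gamma}y^{5-\beta}$ or $(y+z)^a$). The main obstacle, as in the $b=4$ proof, will be the handful of small cases where the matrix method fails to produce a full-rank square system — typically when $\gamma=1$ (so that $y^b-(y+z)^{b-1}z$ contributes too few usable low-$z$ terms) and $\beta$ is near its extreme, forcing the socle degree to be odd so that $k+1$ is "tight". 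In those cases I would fall back on the liaison/linkage argument used in Subcases~1 and~2 of Case~1 of Proposition~\ref{Proposition4.2}: pass to the Gorenstein quotient $R/\point$ with $\point=(x^a,y^5-x^{5-\gamma}z^\gamma,z^c,\,x^{a-5+\gamma}y^{5-\beta})$, let $\aa=(x^a,y^5-x^{5-\gamma}z^\gamma,z^c)$ be the underlying complete intersection (which has the WLP by \cite{HMNW2003}), form $\bb=\aa\colon_R\point$, and use \cite[Proposition~5.2]{BE1977} to get the short exact sequence $0\to (R/\bb)(-e)\xrightarrow{\times m} R/\aa\to R/\point\to 0$ for the appropriate monomial $m$ and shift $e$; then chase the Snake lemma on the diagram obtained by tensoring with $R/L$ to deduce that $\times L$ is injective in the critical degree, using that $\dim_K[R/(\aa,L)]_{j+1}-\dim_K[R/(\aa,L)]_{j}$ is computed from the Koszul resolution of the complete intersection $R/\aa$.

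Finally I would assemble the pieces: Corollary~\ref{Corollary3.13}(a) for $a\geq c+4$, the matrix-method verifications for the bulk of the small strata, and the linkage argument for the tight odd-socle-degree exceptions, concluding that $R/I$ has the WLP for all admissible $\beta,\gamma$ when $b=5$ and $a\geq c\geq 5$. By the symmetry of $x$ and $z$ in Setting~\ref{IdealGorenstein} one may additionally assume $\gamma\leq\lceil\tfrac{c}{2}\rceil$ when $a=c$, which trims the case list further. The expected hard step is thus not any single computation but the identification — and uniform treatment — of the exceptional $\gamma=1$, odd-socle-degree configurations; everything else is an instance of the already-established Theorems~\ref{Theorem3.7} and~\ref{Theorem3.8} or a direct application of the matrix method.
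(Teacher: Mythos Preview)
Your overall strategy matches the paper's proof almost exactly: reduce via Corollary~\ref{Corollary3.13}(a) to $a\in\{c,c+1,c+2,c+3\}$, use Remark~\ref{Remark3.3}(i) (and, as the paper does, Theorem~\ref{Theorem3.7}) to cut down to a finite list of $(\beta,\gamma)$, then alternate between the matrix method and the linkage/Snake-lemma argument of Proposition~\ref{Proposition4.2}. The paper in fact uses the linkage argument more often than you anticipate (in Case $a=c+1$ it is needed for $\gamma=1,2$, not only $\gamma=1$; in Case $a=c$, after passing to $\gamma\in\{3,4\}$ by symmetry, it is needed for $(\beta,\gamma)\in\{(1,3),(1,4),(3,3)\}$), but your plan already allows for this.

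Two small corrections to your bookkeeping. First, here $b=5$ but $c\geq 5$ is arbitrary, so Remark~\ref{Remark3.3}(iii) only clears $i\geq c$; the residual range is $0\leq i\leq c-1$, \emph{not} $0\leq i\leq 4$. In the matrix-method subcases the resulting systems therefore have size $\sim c$ (e.g.\ $a$, $a-1$, $a-2$ in the paper), not size $5$, and the relation $y^5-(y+z)^{5-\gamma}z^\gamma\in J$ is used repeatedly (multiplied by $y^{k+1-5-i}z^i$ for many $i$) to fill out the unitriangular block. Second, when $a=c$ the swap $x\leftrightarrow z$ sends $\gamma\mapsto b-\gamma=5-\gamma$, so the symmetry reduction is $\gamma\geq 3$ (or $\leq 2$); it is a statement about $b$, not about~$c$. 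Neither slip is fatal, but both will bite the moment you start writing down the matrices.
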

\begin{proof}
If $a\geq c+4$, then $R/I$ has the WLP by Corollary~\ref{Corollary3.13}(a). By Theorem~\ref{Theorem3.7}, $R/I$ has the WLP, whenever $\max\{1,c-a+4\}\leq \beta\leq 4$ and $\gamma\geq\lfloor\frac{\beta+c-a+3}{2}\rfloor.$ Thus, we only consider the following four cases.

\noindent{\bf \underline{Case 1: $a=c$.}}  Notice that the socle degree of $R/I$ is $D=2a+2-\beta$. By Remark~\ref{Remark3.3}(i), it suffices to show that $R/I$ has the WLP, provided $\beta=1$  or $\beta=3$. By symmetry of $x$ and $z$, we can assume that $3\leq \gamma\leq 4$. Thus we consider the following  subcases.

\noindent{\bf \underline{Subcase 1: $\beta=1$ and $3\leq \gamma \leq 4$.}} 
More precisely, we consider the ideal
\begin{align*}
I=(x^{a}, y^5 - x^{5-\gamma} z^\gamma, z^{a}, x^{a-5+\gamma}y^4, y^4z^{a-\gamma}).
\end{align*}
Let $L$ be a general linear form. It suffices to show that the map
$$\times L: [R/I]_{a}\longrightarrow [R/I]_{a+1}$$
is an isomorphism. Set $\point=(x^{a}, y^5-x^{5-\gamma}z^\gamma, z^{a}, y^4z^{a-\gamma}), \aa =(x^{a}, y^5-x^{5-\gamma}z^\gamma, z^{a})$ and $\mathfrak{b}=\aa\colon_R \point$. Then $I_{a+1}=\point_{a+1},\ I_{a}=\point_{a}$ and $R/\bb$ is Gorenstein. Moreover, we have an exact sequence
\begin{align*}
\xymatrix{ 0\ar[r]& (R/\bb)(-a-4+\gamma)\ar[rrr]^{ \qquad\times y^4z^{a-\gamma}}&&&R/\aa \ar[r] & R/\point \ar[r]&0 }
\end{align*}
which gives the following commutative diagram
\begin{align*}
\xymatrix{ 0\ar[r]& [R/\bb]_{\gamma-4}\ar[rr]\ar[d]&&[R/\aa]_{a}  \ar[r]\ar[d]&[R/I]_{a}  \ar[r]\ar[d]^{\times L} &0  \\ 
	0\ar[r]& [R/\bb]_{\gamma-3}\ar[rr]^{\times y^4z^{a-\gamma}}\ar[d]&&[R/\aa]_{a+1}  \ar[r]\ar[d] &[R/I]_{a+1}  \ar[r]\ar[d] &0 \\  
	&  [R/(\bb,L)]_{\gamma-3}\ar[rr] &&[R/(\aa,L)]_{a+1}  \ar[r] &[R/(I,L)]_{a+1}   &   }.
\end{align*}
As $R/\aa$ has the WLP, hence $\times L: [R/\aa]_{a-1} \longrightarrow [R/\aa]_{a}$ is injective. By the Snake lemma, we get an exact sequence
\begin{align}\label{Sequence5.1}
0\longrightarrow \Ker(\times L) \longrightarrow [R/(\bb,L)]_{\gamma-3} \longrightarrow [R/(\aa,L)]_{a+1} \longrightarrow  [R/(I,L)]_{a+1} \longrightarrow 0.
\end{align}
Further, since $R/\aa$ is resolved by the Koszul complex and it  has the WLP, we get
\begin{align*}
\dim_K[R/(\aa,L)]_{a+1}&=H_{R/\aa}(a+1)-H_{R/\aa}(a)=1.
\end{align*}
If $\gamma=3$, then the map  $\times y^4z^{a-3}: [R/(\bb,L)]_{0}=K \longrightarrow [R/(\aa,L)]_{a+1}\cong K$  is an isomorphism. If $\gamma=4$, then as $y^5z^{a-4}=xz^{a}\in \aa$, we get $y\in \bb$. It follows that $\dim_K[R/\bb]_{1}=2$, hence $\dim_K[R/(\bb,L)]_{1}=1.$ Thus the map  
$$\times y^4z^{a-4}: [R/(\bb,L)]_{1}=K \longrightarrow [R/(\aa,L)]_{a+1}$$
is also an isomorphism. In summary, by \eqref{Sequence5.1} we get that the map
$$\times L: [R/I]_{a} \longrightarrow [R/I]_{a+1}$$
is always an isomorphism.

\noindent{\bf \underline{Subcase 2: $\beta=3$ and $\gamma = 3$.}} \label{Subcase2ofCase1}
More precisely, we consider the ideal
\begin{align*}
I=(x^{a}, y^5 - x^2z^3, z^{a}, x^{a-2}y^2, y^2z^{a-3}).
\end{align*}
Let $L$ be a general linear form. We will show that the map
$$\times L: [R/I]_{a-1}\longrightarrow [R/I]_{a}$$
is an isomorphism. Set $\point=(x^{a}, y^5 - x^2z^3, z^{a},  y^2z^{a-3}), \aa =(x^{a}, y^5 - x^2z^3, z^{a})$ and $\mathfrak{b}=\aa\colon_R \point$. Then $R/\bb$ is Gorenstein. Moreover, we have an exact sequence
\begin{align*}
\xymatrix{ 0\ar[r]& (R/\bb)(-a+1)\ar[rrr]^{ \qquad\times y^2z^{a-3}}&&&R/\aa \ar[r] & R/\point \ar[r]&0 }
\end{align*}
which gives the following commutative diagram
\begin{align*}
\xymatrix{ 0\ar[r]& [R/\bb]_{0}=K\ar[rr]\ar[d]&&[R/\aa]_{a-1}  \ar[r]\ar[d]&[R/\point]_{a-1}  \ar[r]\ar[d]^{\times L} &0  \\ 
	0\ar[r]& [R/\bb]_{1}=[R]_1\ar[rr]^{\times y^2z^{a-3}}\ar[d]&&[R/\aa]_{a}  \ar[r]\ar[d] &[R/\point]_{a}  \ar[r]\ar[d] &0 \\  
	&  [R/(\bb,L)]_{1}\ar[rr] &&[R/(\aa,L)]_{a}  \ar[r] &[R/(\point,L)]_{a}   &   }.
\end{align*}
As in the above proof, we have that $\times L: [R/\aa]_{a-1} \longrightarrow [R/\aa]_{a}$ is injective. By the Snake lemma, we get an exact sequence
\begin{align}\label{Sequence5.2}
0\longrightarrow \Ker(\times L) \longrightarrow [R/(\bb,L)]_{1} \longrightarrow [R/(\aa,L)]_{a} \longrightarrow  [R/(I,L)]_{a} \longrightarrow 0.
\end{align}
Moreover, we have $\dim_K[R/(\bb,L)]_{1}=2$ and
\begin{align*}
\dim_K[R/(\aa,L)]_{a}&=H_{R/\aa}(a)-H_{R/\aa}(a-1)=3.
\end{align*}
It follows that  $\times y^2z^{a-3}: [R/(\bb,L)]_{1}\longrightarrow [R/(\aa,L)]_{a}$
is injective. By \eqref{Sequence5.2} we get that $\times L: [R/\point]_{a-1}\longrightarrow [R/\point]_a$ is injective. Now we consider the following commutative diagram where the first two rows are exact
\begin{align*}
\xymatrix{ 0\ar[r]& 0\ar[rr]\ar[d]&&[R/\point]_{a-1}  \ar[r]\ar[d]&[R/I]_{a-1}  \ar[r]\ar[d]^{\times L} &0  \\ 
	0\ar[r]&[R/\point]_{0}=K\ar[rr]^{\times x^{a-2}y^2}\ar[d]&&[R/\point]_{a}  \ar[r]\ar[d] &[R/I]_{a}  \ar[r]\ar[d] &0 \\  
	&  K\ar[rr] &&[R/(\point,L)]_{a}  \ar[r] &[R/(I,L)]_{a}   &   }.
\end{align*}
It is easy to see that the map $\times  x^{a-2}y^2: K\longrightarrow [R/(\point,L)]_{a}$ is injective. By the Snake lemma, we get that $$\times L: [R/I]_{a-1}\longrightarrow [R/I]_{a}$$
is injective.

\noindent{\bf \underline{Subcase 3: $\beta=3$ and $\gamma = 4$.}} 
More precisely, we consider the ideal
\begin{align*}
I=(x^{a}, y^5 - x z^4, z^{a}, x^{a-1}y^2, y^2z^{a-4}).
\end{align*}
Set $L=x-y-z$. By (ii) and (iii) of Remark~\ref{Remark3.3}, it suffices to show that $y^{a-i}z^i\in J$ for all $0\leq i\leq a-1$,
where 
$$J=((y+z)^{a}, y^5-(y+z) z^4, z^{a}, (y+z)^{a-1}y^{2}, y^{2}z^{a-4}).$$
By using $y^5-yz^4 -z^5,y^{2}z^{a-4},(y+z)^{a}\in J,$ the matrix method in Remark~\ref{Remark3.5} gets a system of $a$ equations in $a$ indeterminates $y^{a}, y^{a-1}z,\ldots, yz^{a-1}$ as follows
$$MX=U$$
where $X^t=\bmt{y^{a} & y^{a-1}z &\cdots & yz^{a-1}}$ and $U^t=\bmt{u_1&u_2&\cdots&u_{a}}$ with $u_i\in J$ and $M\in \MS_{a}$. Since $\det(M)>0$ by Lemma~\ref{Lemmadeterminant}, $y^{a-i}z^i\in J$ for all $0\leq i\leq a-1$.


\noindent{\bf \underline{Case 2: $a=c+1$.}} It follows that $R/I$ has the WLP for any $3\leq \beta\leq 4$ and $\gamma\geq \lfloor\frac{\beta+2}{2}\rfloor$ by Theorem~\ref{Theorem3.7}. Notice that the socle degree of $R/I$ is $D=2a+1-\beta$. By Remark~\ref{Remark3.3}, we show that $R/I$ has the WLP, provided $\beta=2$ and $1\leq \gamma\leq 4$; or $\beta=4$ and $1\leq \gamma \leq 2$. Thus we consider the following five subcases.

\noindent{\bf \underline{Subcase 1: $\beta=2$ and $3\leq \gamma \leq 4$.}} 
More precisely, we consider the ideal
\begin{align*}
I=(x^{a}, y^5 - x^{5-\gamma} z^\gamma, z^{a-1}, x^{a-5+\gamma}y^3, y^3z^{a-1-\gamma}).
\end{align*}
Set $L=x-y-z$. Again by (ii) and (iii) of Remark~\ref{Remark3.3}, it remains to show that  $y^{a-i}z^i\in J$ for all $0\leq i\leq a-2$,
where $$J=((y+z)^{a}, y^5-(y+z)^{5-\gamma} z^\gamma, z^{a-1}, (y+z)^{a-5+\gamma}y^{3}, y^{3}z^{a-1-\gamma}).$$
As $y^{3}z^{a-3}, y^4z^{a-4}\in J$ since $y^{3}z^{a-1-\gamma}\in J$. Finally, by using $$y^5-(y+z)^{5-\gamma} z^\gamma,y^{4}z^{a-4}, y^{3}z^{a-3},(y+z)^{a}\in J,$$
the matrix method in Remark~\ref{Remark3.5} gets a system of $(a-1)$ equations in $(a-1)$ indeterminates $y^{a}, \ldots, y^{2}z^{a-2}$ as follows
$$MX=U$$
where $X^t=\bmt{y^{a} & \cdots & y^{2}z^{a-2}}$ and $U^t=\bmt{u_1&\cdots&u_{a-1}}$ with $u_i\in J$ and $M\in \MS_{a-1}$. It follows that $\det(M)>0$ by Lemma~\ref{Lemmadeterminant}. Thus, $y^{a-i}z^i\in J$ for all $0\leq i\leq a-2$.

\noindent{\bf \underline{Subcase 2: $\beta=2$ and $\gamma =1$.}} 
Let $L$ be a general linear form. Let us check that
$$\times L: [R/I]_{a-1}\longrightarrow [R/I]_{a}$$
is an isomorphism. Set $\point=(x^{a}, y^5-x^{4}z, z^{a-1}, x^{a-4}y^{3}), \aa =(x^{a}, y^5-x^{4}z, z^{a-1})$ and $\mathfrak{b}=\aa\colon_R \point$. Then $I_{a-1}=\point_{a-1},\ I_{a}=\point_{a}$. We now go along the same lines as in the proof of Subcase 1 of Case 1, we get that  $\times x^{a-4}y^3: [R/(\bb,L)]_1 \longrightarrow [R/(\aa,L)]_{a}$  is an isomorphism. Thus 
$$\times L: [R/I]_{a-1} \longrightarrow [R/I]_{a}$$
is an isomorphism.

\noindent{\bf \underline{Subcase 3: $\beta=2$ and $\gamma =2$.}} 
Let $L$ be a general linear form.  We use  the same method as in the proof of Subcase 2 of Case 1 to show that
$$\times L: [R/I]_{a-1}\longrightarrow [R/I]_{a}$$
is injective, by setting $\point=(x^{a}, y^5-x^{3}z^2, z^{a-1}, x^{a-3}y^{3})$ and $\aa =(x^{a}, y^5-x^{3}z^2, z^{a-1})$.

\noindent{\bf \underline{Subcase 4: $\beta=4$ and $\gamma =1$.}} 
Set $\point=(x^{a}, y^5 - x^{4} z, z^{a-1},  yz^{a-2})$ and $ \aa=(x^{a}, y^5 - x^{4} z, z^{a-1})$. Again we use  the same method as in the proof of Subcase 2 of Case 1 to show that
$$\times L: [R/I]_{a-2}\longrightarrow [R/I]_{a-1}$$
is injective, where $L$ is a general linear form.

\noindent{\bf \underline{Subcase 5: $\beta=4$ and $\gamma =2$.}} 
Set $\point=(x^{a}, y^5 - x^{3} z^2, z^{a-1}, x^{a-3}y)$ and $\aa=(x^{a}, y^5 - x^{3} z^2, z^{a-1}).$ Let $L$ be a general linear form.  We go along the same lines as in the proof of Subcase 2 of Case 1 to show that $\times L: [R/I]_{a-2} \longrightarrow [R/I]_{a-1}$ is injective. 

\noindent{\bf \underline{Case 3: $a=c+2$.}} It follows that $R/I$ has the WLP for any $2\leq \beta\leq 4$ and $\gamma\geq \lfloor\frac{\beta+1}{2}\rfloor$ by Theorem~\ref{Theorem3.7}. Notice that the socle degree of $R/I$ is $D=2a-\beta$. By Remark~\ref{Remark3.3}(i), it suffices to show that $R/I$ has the WLP, provided $\beta=1$ and $1\leq \gamma\leq 4$, or $\beta=3$ and $\gamma=1$. Thus we consider the following three subcases.

\noindent{\bf \underline{Subcase 1: $\beta=3$ and $\gamma=1$.}} More precisely, we consider the ideal
\begin{align*}
I=(x^{a}, y^5 - x^{4} z, z^{a-2}, x^{a-4}y^2, y^2z^{a-3}).
\end{align*}
Let $L$ be a general linear form. Let us show that
$$\times L: [R/I]_{a-2}\longrightarrow [R/I]_{a-1}$$
is an isomorphism. 
Set $\point=(x^{a}, y^5 - x^{4} z, z^{a-2}, x^{a-4}y^2)$ and $\aa =(x^{a}, y^5 - x^{4} z, z^{a-2})$. We go along the same lines as  in the proof of Subcase~2 of Case 1  to show that 
\begin{align*}
\times L: [R/\point]_{a-2}\longrightarrow [R/\point]_{a-1}\quad \text{and}\;\times L: [R/I]_{a-2}\longrightarrow [R/I]_{a-1}
\end{align*}
are injective.

\noindent{\bf \underline{Subcase 2: $\beta=1$ and $\gamma=1$.}}
Let $L$ be a general linear form. We will prove that
$$\times L: [R/I]_{a-1}\longrightarrow [R/I]_{a}$$
is an isomorphism. Set $\point=(x^{a}, y^5-x^{4}z, z^{a-2}, x^{a-4}y^{4})$ and $\aa =(x^{a}, y^5-x^{4}z, z^{a-2})$. Then $I_{a-1}=\point_{a-1},\ I_{a}=\point_{a}$. By using the same method as in the proof of Subcase 1 of Case 1, we get $\times L: [R/I]_{a-1} \longrightarrow [R/I]_{a}$
is an isomorphism.

\noindent{\bf \underline{Subcase 3: $\beta=1$ and $2\leq \gamma\leq 4$.}} 
Set $L=x-y-z$. By (ii) and (iii) of Remark~\ref{Remark3.3}, it suffices to show that $y^{a-i}z^i\in J$ for all $0\leq i\leq a-3$, where $$J=((y+z)^{a}, y^5-(y+z)^{5-\gamma} z^\gamma, z^{a-2}, (y+z)^{a-5+\gamma}y^{4}, y^{4}z^{a-2-\gamma}).$$
Clearly, $y^{4}z^{a-4}=y^{4}z^{a-2-\gamma}z^{\gamma-2}\in J$. Now, by using 
$$y^5-(y+z)^{5-\gamma} z^\gamma, y^{4}z^{a-4},(y+z)^{a}\in J,$$
the matrix method in Remark~\ref{Remark3.5} gives a system of $(a-2)$ equations in $(a-2)$ indeterminates $y^{a}, \ldots, y^{3}z^{a-3}$ as follows 
$$MX=U,$$
where $X^t=\bmt{y^{a} & y^{a-1}z &\cdots & y^{3}z^{a-3}}$ and $U^t=\bmt{u_1&u_2&\cdots&u_{a-2}}$ with $u_i\in J$ and $M\in \MS_{a-2}$. It follows that $\det(M)>0$ by Lemma~\ref{Lemmadeterminant}. Thus, by Cramer's rule,  $y^{a-i}z^i\in J$ for all $0\leq i\leq a-3$.

\noindent{\bf \underline{Case 4: $a=c+3$.}} It follows that $R/I$ has the WLP for any $1\leq \beta\leq 3$ or $\beta=4$ and $\gamma\geq 2$ by Theorem~\ref{Theorem3.7}. It suffices to show that $R/I$ has the WLP for $\beta=4$ and $\gamma=1.$
More precisely, we consider the ideal
\begin{align*}
I=(x^{c+3}, y^5 - x^{4} z, z^c, x^{c-1}y, yz^{c-1}).
\end{align*}
Let $L$ be a general linear form. Set $\point=(x^{c+3}, y^5 - x^{4} z, z^c,yz^{c-1})$ and $\aa=(x^{c+3}, y^5 - x^{4} z, z^c)$. By using the same method as in the proof of Subcase 2 of Case 1, we get  $\times L: [R/I]_{c} \longrightarrow [R/I]_{c+1}$ is injective. 
\end{proof}

Now we have actually proved the following result.
\begin{Theo}  \label{Theorem5.3}
Let  $I$ be as in Setting~\ref{IdealGorenstein}. If one of $a,b$ and $c$ is equal to five, then $R/I$ has the WLP.
\end{Theo}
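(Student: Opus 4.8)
The plan is to deduce Theorem~\ref{Theorem5.3} from Propositions~\ref{Proposition5.1} and~\ref{Proposition5.2}, which between them already settle the two essential configurations, the remaining cases being absorbed by results from the previous sections. Since we assume throughout that $a\geq c$, the hypothesis that one of $a,b,c$ equals $5$ means $c=5$, $b=5$, or $a=5$. The case $a=5$ requires no new work: if moreover $c=5$ it is the case $c=5$, and if $c\leq 4$ then $c\in\{2,3,4\}$, so $R/I$ has the WLP by Corollary~\ref{Corollary3.6}, Corollary~\ref{Corolarry3.9}, or Theorem~\ref{Theorem4.3}. Thus it remains to handle $c=5$ and $b=5$.

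Next I would check that, after first peeling off the subcases where the \emph{other} parameter is small, the remaining hypotheses of the relevant proposition hold automatically. If $c=5$ and $b\leq 4$, then $b\in\{2,3,4\}$ and we again invoke Corollary~\ref{Corollary3.6}, Corollary~\ref{Corolarry3.9}, or Theorem~\ref{Theorem4.3}; if $c=5$ and $b\geq 5$, then $a\geq c=5$, $a\geq b-c+2=b-3$ (as noted after Setting~\ref{IdealGorenstein}), and $\gamma\leq\min\{b-1,c-1\}=4$, so Proposition~\ref{Proposition5.1} applies verbatim. Symmetrically, if $b=5$ and $c\leq 4$ the earlier small‑parameter results finish the job, while if $b=5$ and $c\geq 5$, then $a\geq c\geq 5$, $\beta\leq b-1=4$, and $\gamma\leq\min\{b-1,c-1\}=4$, so Proposition~\ref{Proposition5.2} applies. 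This exhausts all cases and proves the theorem.

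For orientation it is worth recalling how Propositions~\ref{Proposition5.1} and~\ref{Proposition5.2} are themselves obtained, since that is where the labour lies. One reduces, via Proposition~\ref{Proposition2.7} and Remark~\ref{Remark3.3}, the WLP of $R/I$ to a single multiplication map $\times L$ with $L=x-y-z$ (or a general linear form), hence to the membership $y^{k+1-i}z^i\in J$ for $0\leq i\leq 4$, where $k=\lfloor\tfrac{a+b+c-\beta-3}{2}\rfloor$. For most ranges of $(\beta,\gamma)$ this already follows from Theorems~\ref{Theorem3.7} and~\ref{Theorem3.8} and Corollary~\ref{Corollary3.13}; for the finitely many boundary configurations one either applies the \textbf{matrix method} of Remark~\ref{Remark3.5} — peeling off the monomials that are visible multiples of $z^c$, $y^{b-\beta}z^{c-\gamma}$ or $(y+z)^{a-b+\gamma}y^{b-\beta}$, then solving a square linear system whose coefficient matrix lies in $\MS_n$ and is nonsingular by Lemma~\ref{Lemmadeterminant} — or, when $c$ is too large for $z^c$ to help, links $I$ through an intermediate monomial ideal $\point$ to the complete intersection $\aa=(x^a,y^b-x^{b-\gamma}z^\gamma,z^c)$, sets $\bb=\aa\colon_R\point$ (Gorenstein by \cite[Proposition~5.2]{BE1977}), and chases the Snake Lemma in the critical degree, using that $R/\aa$ has the WLP and that its Hilbert function comes from the Koszul complex.

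Because the reduction itself is immediate, the real obstacle is concentrated in the two propositions: one must verify that every boundary configuration of $(a,b,c,\beta,\gamma)$ with $b=5$ or $c=5$ is assigned to a method that genuinely works there, that the linear systems produced by the matrix method are truly square with coefficient matrix in $\MS_n$, and that the linkage diagrams are exact in exactly the degree required. The most delicate subcase is $b=5$ with $a=c$ and small $\gamma$, where the matrix method over $K[y,z]$ does not close and one must run the linkage argument in two stages — first establishing injectivity of $\times L$ on $R/\point$, then transferring it to $R/I$ through a second commutative diagram.
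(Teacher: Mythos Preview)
Your proposal is correct and follows essentially the same route as the paper: reduce to $b=5$ or $c=5$ using the standing assumption $a\geq c$ together with the small-parameter results (Corollaries~\ref{Corollary3.6}, \ref{Corolarry3.9}, Theorem~\ref{Theorem4.3}), and then invoke Propositions~\ref{Proposition5.1} and~\ref{Proposition5.2}. The paper's own proof is terser---it simply says ``it suffices to consider the cases where $b=5$ or $c=5$'' and cites the two propositions---leaving implicit the peeling-off of the subcases $a=5,\,c\leq 4$ and $c=5,\,b\leq 4$ and $b=5,\,c\leq 4$ that you spell out; your explicit verification that the hypotheses of the two propositions are met (e.g.\ $a\geq b-3$ from $a\geq b-c+2$ when $c=5$) is a welcome clarification rather than a different argument.
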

\begin{proof}
It is assumed that $a\geq c$. Therefore, it suffices to consider the cases where $b=5$ or $c=5$. The theorem has been proved in Propositions~\ref{Proposition5.1} and~\ref{Proposition5.2}.
\end{proof}

Finally, after the revised version of this  paper was completed, Lisa Nicklasson kindly informed us that O. Gasanova, S. Lundqvist and L. Nicklasson in \cite{Lisa2020} also proved the following result.
\begin{Theo}  
	Let  $I$ be as in Setting~\ref{IdealGorenstein}. Then $R/I$ has the strong Lefschetz property.  Namely, there is a linear form $\ell\in [R/I]_1$ such that the multiplication
	$$\times \ell^s: [R/I]_{i}\longrightarrow [R/I]_{i+s}$$
	has maximal rank for all $i,s$. In particular, this algebra has the WLP.
\end{Theo}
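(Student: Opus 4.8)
The plan is to exploit the presentation $I=\mathfrak{a}\colon_R y^{\beta}$ from Proposition~\ref{Proposition3.1}, where $\mathfrak{a}=(x^{a},\,y^{b}-x^{b-\gamma}z^{\gamma},\,z^{c})$ and $G=R/\mathfrak{a}$ is a codimension three complete intersection of socle degree $a+b+c-3$. Multiplication by $\bar y^{\beta}$ identifies $R/I=G/(0\colon_{G}\bar y^{\beta})$ with the graded submodule $\bar y^{\beta}G\subseteq G$, up to the shift by $\beta$; hence, for any linear form $\ell$, the map $\times\ell^{s}\colon[R/I]_{i}\to[R/I]_{i+s}$ is, under this identification, precisely the restriction to the subspace $\bar y^{\beta}[G]_{i}$ of the map $\times\ell^{s}\colon[G]_{i}\to[G]_{i+s}$. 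So the proof splits into two tasks: (a) prove that $G$ has the SLP; (b) prove that the strong Lefschetz maps of $G$ stay of maximal rank after restriction to the subspaces $\bar y^{\beta}[G]_{i}$.

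For task (a), recall that the monomial complete intersection $R/(x^{a},y^{b},z^{c})$ has the SLP over a field of characteristic zero \cite{Stanley80,Watanabe1987}. One then removes the binomial relation either by a deformation argument or by a free extension argument. For the deformation: a general linear form $\ell_{0}$ is a strong Lefschetz element of $R/(x^{a},y^{b},z^{c})$, so the finitely many determinants witnessing this are nonzero polynomials in the parameter $t$ of the family $G_{t}=R/(x^{a},\,y^{b}-t\,x^{b-\gamma}z^{\gamma},\,z^{c})$ (computed in the common monomial basis $\{x^{i}y^{j}z^{k}\}$), whence $G_{t}$ has the SLP for all $t$ outside a finite set; since the substitution $x\mapsto\lambda x$ gives a graded isomorphism $G=G_{1}\cong G_{\lambda^{b-\gamma}}$ for every $\lambda\in K^{\ast}$ and $K$ is infinite, this family of isomorphisms meets the good locus and $G$ has the SLP. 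Alternatively, $G$ is a free extension of $K[z]/(z^{c})$ with fiber $K[x,y]/(x^{a},y^{b})$: since $G/zG\cong K[x,y]/(x^{a},y^{b})$ has dimension $ab$ and $\dim_{K}G=abc$, $G$ is free of rank $ab$ over $K[z]/(z^{c})$ by graded Nakayama, and both base and fiber have the SLP, so the free extension theorem of Harima and Watanabe applies.

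Task (b) is the heart of the matter, and the expected obstacle. The naive inheritance fails because $y$ is not a Lefschetz element of $G$, so $\bar y^{\beta}G$ is not of the form $\bar\ell^{\beta}G$ for a strong Lefschetz element $\ell$, and a map of maximal rank need not restrict to a map of maximal rank on a subspace. I would attack (b) through Macaulay's inverse system: $G$ is dual to an explicit form $F$ determined by $\mathfrak{a}$ (cf. the structural analysis of \cite{Gu2018}), $R/I$ is dual to the contraction $y^{\circ\beta}\circ F$, and the task reduces to checking that this contraction is again in ``strong Lefschetz normal form'', equivalently that the annihilator $0\colon_{G}\bar y^{\beta}$ is compatible with the weight-string decomposition of $G$ coming from its $\mathfrak{sl}_{2}$-structure; this would simultaneously produce a strong Lefschetz element for $R/I$. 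A second, more computational route stays in the spirit of Sections~\ref{Section3}--\ref{Section5}: by Gorenstein duality with $D=a+b+c-\beta-3$, the SLP of $R/I$ is equivalent to $[R/(I+(\ell^{s}))]_{j}=0$ for all $s\ge 1$ and all $j>\lfloor(D+s)/2\rfloor$, which, after specializing $\ell$ and reducing modulo $\ell^{s}$, becomes the assertion that a prescribed list of monomials $y^{m-i}z^{i}$ lies in an explicit ideal, to be handled by the matrix method of Remark~\ref{Remark3.5} together with singularity criteria in the style of Lemma~\ref{Lemmadeterminant}. The clean statement recorded from \cite{Lisa2020} strongly suggests that the inverse-system route succeeds and avoids the large parameter-by-parameter case analysis that the second route would require.
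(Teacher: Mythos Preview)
The paper does \emph{not} prove this theorem. It is recorded in the closing lines of Section~\ref{Section5} as a result communicated to the authors and established in \cite{Lisa2020} by Gasanova, Lundqvist and Nicklasson; no argument is given beyond the citation. So there is nothing in the paper to compare your proposal against.

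As for the proposal itself: your reduction is correct---multiplication by $\bar y^{\beta}$ identifies $R/I$ with the shifted submodule $\bar y^{\beta}G\subseteq G$, and your arguments for (a) (either the one-parameter deformation from the monomial complete intersection, using the scaling $x\mapsto\lambda x$ to land in the good locus, or the Harima--Watanabe free extension over $K[z]/(z^{c})$) are sound. The gap is exactly where you say it is. Task~(b) is not proved: a strong Lefschetz map $\times\ell^{s}\colon[G]_{j}\to[G]_{j+s}$ of maximal rank need not restrict to a map of maximal rank between $\bar y^{\beta}[G]_{j}$ and $\bar y^{\beta}[G]_{j+s}$, because these subspaces have their own dimensions and the relevant ``middle degree'' for $R/I$ is shifted relative to that of $G$. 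Your two suggested attacks (inverse systems / $\mathfrak{sl}_{2}$-compatibility, or a matrix method for all powers $\ell^{s}$) are plausible directions, but neither is carried out, and the closing sentence appealing to the existence of \cite{Lisa2020} is speculation, not an argument. In short, what you have written is a well-structured plan with a correctly identified obstacle, not a proof; and since the paper offers no proof of its own, the only honest comparison is that both defer to \cite{Lisa2020}.
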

\section*{Acknowledgments}
The authors are grateful to the referee for very carefully reading the manuscript and  for the many helpful comments that improved the presentation of the article. 
The first author was partially supported by the grant MTM2016-78623-P. The second author was partially supported by the  grant 2017SGR00585.
Part of this work was done while the second author was visiting the Vietnam Institute for Advance Study in Mathematics (VIASM), he would like to thank the VIASM for hospitality and financial support. Finally, the second author would like to thank Hue University for the support.

\bibliographystyle{plain} 
\bibliography{WLP_Gorenstein3} 

\end{document}